\newcommand{\A}{\mathbb{A}}
\newcommand{\PP}{\mathbb{P}}
\newcommand{\RR}{\mathbb{R}}
\DeclareMathOperator{\Gr}{Gr}
\DeclareMathOperator{\trop}{trop}
\DeclareMathOperator{\Spec}{Spec}
\newcommand{\cO}{\mathcal{O}}
\tikzset{>=latex}
\newcommand{\CC}{\mathbb{C}}%
\newcommand{\kk}{\Bbbk}%
\newcommand{\bQ}{\mathbf{Q}}%
\DeclareMathOperator{\Hom}{\operatorname{Hom}}%
\DeclareMathOperator{\Proj}{\operatorname{Proj}}%
\begin{document}

\title*{Equations and tropicalization of Enriques surfaces}
\author{Barbara Bolognese, Corey Harris and Joachim Jelisiejew}
\institute{Barbara Bolognese \at The University of Sheffield   \email{b.bolognese@sheffield.ac.uk}
\and Corey Harris \at Florida State University \email{charris@math.fsu.edu}
\and Joachim Jelisiejew \at Institute of Mathematics, Informatics and Mechanics, University of Warsaw \email{jjelisiejew@mimuw.edu.pl}}
%
%
\maketitle

\emph{``Ogni superficie $F$ di generi $p_a=p_g=P_3=0, P_2=1$ si pu\`o
            considerare come una superficie doppia di generi $1$. Pi\`u
            precisamente: le coordinate dei punti di $F$ si possono esprimere
            razionalmente per mezzo di $x,y,z$ e di $z=\sqrt{f_s(x,y)}$, dove
        il polinomio $f_s$ \`e del quarto grado separatamente rispetto ad
    $x,y$, ed ammette una trasformazione involutoria in se stesso priva di
coincidenze; ad ogni punto di $F$ corrispondono due terne $(x,y,z)$.”}-- F.
Enriques~\cite{enriques1908}\newline

\vspace{0.2in}

\abstract{
    In this article we explicitly compute equations of an Enriques
    surface via the involution on a K3 surface. We also discuss its tropicalization and compute the tropical homology, thus recovering a special case of the result of \cite{IKMZ}, and establish a connection between the dimension of the tropical homology groups and the Hodge numbers of the corresponding algebraic Enriques surface.}

\section{Introduction}

In the classification of algebraic surfaces, Enriques surfaces comprise one of
four types of minimal surfaces of 
Kodaira dimension $0$. There are a number of surveys on Enriques surfaces. For those new to the theory, 
we recommend the excellent exposition found in~\cite{bhpv} and~\cite{beauville_book}, and 
for a more thorough treatment, the book~\cite{cossec-dolgachev1989}. 
Another recommended source is~Dolgachev's brief introduction to
Enriques surfaces~\cite{Dolgachev_intro}.

The first Enriques surface was constructed in 1896 by Enriques
himself~\cite{enriques1896} to answer negatively a question posed by Castelnuovo (1895):
\begin{center}
Is
every surface with $p_g=q=0$ rational?
\end{center}
(see Section~\ref{generalities} for the meaning of $p_q$ and $q$)
    Enriques' original surface has a
beautiful geometric construction: the normalization of a degree 6 surface in
$\PP^3$ with double lines given by the edges of a tetrahedron. Another construction, 
the Reye congruence, defined a few years earlier by Reye~\cite{reye1882}, was later proved by Fano~\cite{fano1901} 
to be an Enriques surface. Since these first constructions, there have been many examples of 
Enriques surfaces, most often as quotients of K3 surfaces by a fixed-point-free involution. 
In~\cite{C83}, Cossec describes all birational models of Enriques surfaces given by complete 
linear systems. 

As we recall in Section~\ref{generalities}, every Enriques surface has an unramified double cover 
given by a K3 surface. Often exploiting this double cover, topics of particular interest relate to 
lattice theory, moduli spaces and their compactifications, automorphism groups of Enriques surfaces, 
and Enriques surfaces in characteristic 2. 

While there are many constructions of Enriques surfaces, none give explicit equations for an
Enriques surface embedded in a projective space. In this paper, interpreting
the work of Cossec-Verra, we give explicit ideals for all
Enriques surfaces. 

\begin{theorem}\label{ref:mainthm1}
Let $Y$ be the toric fivefold of degree 16 in $\PP^{11}$ that is
obtained by taking the join of the Veronese surface in $\PP^5$ with itself.
The intersection of $Y$ with a general linear subspace of codimension 3
is an Enriques surface, and every Enriques surface arises in this way.
\end{theorem}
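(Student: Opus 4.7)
I would begin by verifying the numerical and geometric data of $Y = V_1 * V_2 \subset \PP^{11}$, where $V_i$ are two copies of the Veronese surface placed in complementary $\PP^5$'s. Since $V$ is the toric surface $\PP^2$ embedded by $|\cO(2)|$ (of degree $4$), and since the join of two varieties in disjoint linear subspaces is toric with dimension $\dim V_1 + \dim V_2 + 1 = 5$ and degree $\deg V_1 \cdot \deg V_2 = 16$, these invariants match. The birational parametrization
\[
\phi: V_1 \times V_2 \times \PP^1 \dashrightarrow Y, \qquad (v_1, v_2, [s:t]) \mapsto [s v_1 : t v_2],
\]
exhibits $Y$ as toric and serves as a convenient resolution. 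The singular locus of $Y$ is precisely $V_1 \sqcup V_2$, a surface of dimension $2$.

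\textbf{A general codimension-$3$ linear section is Enriques.} Since $\dim \mathrm{Sing}(Y) = 2 < 3 = \mathrm{codim}(\Lambda)$, a general codimension-$3$ subspace $\Lambda$ avoids $\mathrm{Sing}(Y)$, and Bertini ensures smoothness of $S := Y \cap \Lambda$. To classify $S$, I would compute: (i) the canonical class via adjunction $K_S = (K_Y + 3H)|_S$, reading $K_Y$ off from the toric resolution $\phi$ and verifying that its restriction is a nonzero $2$-torsion class; (ii) the Hodge numbers $h^{1,0}(S) = h^{2,0}(S) = 0$, together with $h^0(\cO_S)=1$, by running the Koszul complex associated with the three linear sections and invoking the vanishing of intermediate cohomology of the relevant twists of $\cO_Y$ (using the toric resolution to reduce to standard computations on $\PP^2 \times \PP^2 \times \PP^1$). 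These data give $p_g = q = 0$ and $2K_S \sim 0 \not\sim K_S$, so by the Enriques--Kodaira classification $S$ is an Enriques surface.

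\textbf{Every Enriques surface arises.} For the converse I would invoke Cossec--Verra: given an Enriques surface $S$ with K3 double cover $\pi: X \to S$ and covering involution $\sigma$, I would produce a $\sigma$-invariant polarization $M$ on $X$ whose $\sigma$-eigenspace decomposition $H^0(X, M) = H^0(X, M)^+ \oplus H^0(X, M)^-$ has each summand of dimension $6$ and each inducing a morphism to $\PP^5$ that factors through a Veronese embedding of $\PP^2$. Bundling these two maps yields a $\sigma$-equivariant map $X \to Y$ (with $\sigma$ swapping the two Veroneses), descending to $S \hookrightarrow Y \subset \PP^{11}$ cut out by three linear equations. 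A dimensional sanity check confirms the result: $\dim \Gr(9, 12) = 27$, while the subgroup of $\mathrm{PGL}_{12}$ preserving $Y$ has dimension $17$ (two factors of $\mathrm{PGL}_3$ acting on the Veroneses, contributing $8+8$, plus a $\CC^*$ rescaling the two blocks), and $27 - 17 = 10$ matches the dimension of the moduli of Enriques surfaces.

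\textbf{Main obstacle.} The substantive difficulty sits in the third step: constructing the $\sigma$-equivariant polarization $M$ with the prescribed eigenspace decomposition and Veronese factorization. This is exactly the content of Cossec--Verra's analysis of half-pencils and sub-linear systems on Enriques surfaces, and once it is in place the remaining arguments (Bertini, adjunction, Koszul) are routine bookkeeping on the toric resolution of $Y$.
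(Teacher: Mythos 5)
Your proposal reaches the right conclusion and leans on the same external input as the paper for the hard half (the converse is Cossec--Verra in both treatments), but your forward direction takes a genuinely different and heavier route. The paper never does cohomology on the singular join $Y$: it observes that $Y=\pi(\PP^5)$ is the quotient of $\PP^5$ by the involution $\sigma(x_i,y_i)=(x_i,-y_i)$, so that a codimension-$3$ linear section of $Y$ pulls back to a complete intersection of three $\sigma$-invariant quadrics $Q_i=F_i(x)+G_i(y)$ in $\PP^5$; for general $Q_i$ this is a smooth K3 surface (adjunction and $h^1(\cO)=0$ are standard for complete intersections) avoiding the fixed locus, and the linear section is its free quotient, hence Enriques by the classical criterion. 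This also makes the converse line up verbatim with Cossec's theorem, which is stated exactly in terms of such complete intersections of quadrics (with the smoothness of the K3 cover possibly relaxed --- a subtlety worth keeping, since ``every Enriques surface arises'' requires \emph{special}, not general, linear sections). Your alternative --- Bertini off the $2$-dimensional singular locus, adjunction $K_S=(K_Y+3H)|_S$, and Koszul/aCM vanishing to get $p_g=q=0$ --- is viable and buys independence from the $\PP^5$ picture, but you should justify that $K_Y+3H$ restricts to a \emph{nonzero} $2$-torsion class (this again comes from connectedness of the double cover, i.e.\ from the K3 upstairs) and that the intermediate cohomology of $\cO_Y(j)$ vanishes (the paper gets this from $Y$ being arithmetically Cohen--Macaulay); the proposed reduction to $\PP^2\times\PP^2\times\PP^1$ via the birational parametrization needs care since that map contracts divisors.

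One concrete slip in your converse: the two $6$-dimensional spaces of sections cutting out the two Veronese surfaces are \emph{not} the $\pm$ eigenspaces of $H^0(X,M)$ for the degree-$32$ polarization $M=\cO_X(2)$ --- both $\langle x_ix_j\rangle$ and $\langle y_iy_j\rangle$ are $\sigma$-invariant, and the actual eigenspaces of $H^0(X,M)$ have dimension $9$ each. The correct structure is an eigenspace splitting $H^0(X,L)=H^0(L)^+\oplus H^0(L)^-$ into two $3$-dimensional pieces for the degree-$8$ polarization $L$, with the $\PP^{11}$ coordinates being $\Sym^2H^0(L)^+\oplus\Sym^2H^0(L)^-$. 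Since you are delegating this step to Cossec--Verra anyway, this does not sink the argument, but as stated the eigenspace bookkeeping is wrong.
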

By construction, the Enriques surface in Theorem \ref{ref:mainthm1} is arithmetically
Cohen-Macaulay. Its homogeneous prime ideal in
the polynomial ring with 12 variables is generated by the twelve binomial
quadrics that define $Y$ and three additional linear forms. Explicit code
for producing this Enriques ideal in Macaulay 2 is given in Section \ref{sec:constructions}.


After having constructed Enriques surfaces explicitly, we focus on their
tropicalizations, with the purpose of studying their combinatorial properties.
For this we choose a different K3 surface, namely a hypersurface $S \subset
(\PP^1)^3$ with an involution $\sigma$, see~Example~\ref{ref:kristinsK3:exam}.
In Section~\ref{sec:K3} we get a fairly complete picture for its
tropicalization.
In particular we recover its Hodge numbers and, conjecturally, the Hodge
numbers of $S/\sigma$, which was \cite[Problem~10 on
Surfaces]{Bernd}; this was the starting point of this work.
\begin{proposition}[Example~\ref{ref:kristinsK3:exam}, Proposition~\ref{ref:hodgeagreeK3:prop}, Proposition~\ref{ref:hodgeagreeEnriques:prop}]\label{ref:descriptionoftropK3:prop}
    The dimensions of tropical homology groups of the tropicalization of the K3 surface $S$
    agree with Hodge numbers of $S$. The dimensions of $\sigma$-invariant parts
    of tropical homology groups agree with the Hodge numbers of the Enriques surface $S/\sigma$.
\end{proposition}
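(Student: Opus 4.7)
The plan is to treat the two assertions in sequence, first unwinding the tropical geometry of $S$ itself and only then inserting the $\sigma$-action. For the first assertion, I would describe $\trop(S) \subseteq \RR^3$ explicitly, using the fact that $S$ is cut out by a single polynomial in $(\PP^1)^3$ and that its tropicalization is dual to a regular subdivision of the Newton polytope $[0,2]^3$ (scaled so that the $(1,1,1)$-symmetric defining equation from Example \ref{ref:kristinsK3:exam} produces a unimodular triangulation). Once the subdivision is unimodular, $\trop(S)$ is a smooth tropical surface in the sense of Itenberg--Katzarkov--Mikhalkin--Zharkov, so the theorem of \cite{IKMZ} directly yields $\dim H^{p,q}_{\mathrm{trop}}(\trop S) = h^{p,q}(S)$, which are the Hodge numbers of a K3. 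In practice I would not invoke IKMZ as a black box but verify the numbers by hand from the combinatorics: $H^{0,0}$ and $H^{2,2}$ from connectivity and top-dimensional balancing, $H^{1,1}$ by counting classes of parallelism in the complex of edges, and $H^{2,0}, H^{0,2}$ from the genus of the dual graph of maximal cells, giving an independent check that the ranks are $1,1,20,1,1$ together with the vanishings.

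For the second assertion, the crucial input is that $\sigma$ acts freely on $S$, so $S\to S/\sigma$ is an unramified double cover of an Enriques surface, which gives $H^{p,q}(S/\sigma,\CC) = H^{p,q}(S,\CC)^\sigma$ by étale descent for coherent cohomology. The goal therefore reduces to showing that the same identification lifts to the tropical side, i.e.\ that $\dim H^{p,q}_{\mathrm{trop}}(\trop S)^\sigma$ equals $\dim H^{p,q}(S,\CC)^\sigma$. The involution $\sigma$ acts on $(\PP^1)^3$ by a monomial symmetry and thus descends to a piecewise-linear involution of $\trop S$; the induced action on the cellular chain complex computing tropical homology, and on the multi-tangent coefficient system $\mathcal{F}^p$, is completely combinatorial, so I would split each chain group into $\pm 1$ eigenspaces and compute the $+1$ part cell by cell. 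Comparing against the Enriques numerics $h^{0,0}=h^{2,2}=1$, $h^{1,1}=10$, $h^{2,0}=h^{0,2}=0$ finishes the proof.

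The step I expect to be genuinely delicate is the $\sigma$-equivariant computation of $H^{1,1}$: one must show that exactly half of the $20$ independent edge-parallelism classes contributing to the K3 count survive as $\sigma$-invariants, which requires either an explicit bijection between orbits of edges of $\trop S$ and a spanning set of $H^{1,1}(S/\sigma)$, or a dimension count playing the Lefschetz trace of $\sigma$ on tropical homology against its trace on singular cohomology of $S$. Everything else (the vanishings, $H^{0,0}$, $H^{2,2}$) follows from the fact that $\sigma$ acts trivially on $H^{0,0}$ and $H^{2,2}$ but with determinant $-1$ on the holomorphic two-form, so $H^{2,0}$ and $H^{0,2}$ drop out; the content is in the middle piece, and in matching the combinatorial model of this action on $\trop S$ with the geometric action of $\sigma$ on transcendental cycles.
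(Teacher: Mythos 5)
Your overall strategy --- compute the tropical chain complexes of $\trop(S)$ explicitly, verify the K3 Hodge numbers by hand rather than citing \cite{IKMZ} as a black box, then pass to $\sigma$-invariants and compare with the Enriques Hodge diamond (known, as you say, via the free double cover) --- is exactly the paper's. The first half of your proposal is fine modulo phrasing: the paper gets $H_{0,0}$, $H_{0,1}$, $H_{0,2}$ by identifying $C_{0,\bullet}$ with the singular chain complex and retracting $\trop(S)$ onto the boundary of a cube (a $2$-sphere), gets $H_{2,\bullet}$ by Poincar\'e duality, and gets $\dim H_{1,1}=20$ from $\chi(C_{1,\bullet})=-20$ together with a direct check that $H_{1,0}=0$ (hence $H_{1,2}=0$ by duality); there is no ``genus of a dual graph'' involved, and the count of cells ($48$ vertices, $120+96$ edges, $98$ faces, plus sedentary strata) is where the real bookkeeping lies.

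The gap is in the step you yourself flag as delicate. Of your two proposed routes to $\dim H_{1,1}^{\sigma}=10$, the Lefschetz-trace comparison ``against the trace on singular cohomology of $S$'' does not go through as stated: it presupposes an identification of tropical homology with the cohomology of the algebraic surface \emph{as $\sigma$-representations}, which is strictly more than the dimension statement of \cite{IKMZ} and, as the paper remarks, would require degeneration of a certain spectral sequence at $E_2$. The explicit-bijection route could work but is left entirely unexecuted. What you are missing is the one observation that makes the invariant computation immediate: since the unimodularity hypothesis forces $(0,0,0)\notin\trop(S)$ and $\trop(\sigma)$ is the antipodal map, $\sigma$ acts \emph{freely on the set of cells} of $\trop(S)$, so $\dim C_{p,q}^{\sigma}=\tfrac12\dim C_{p,q}$ for all $p,q$, hence $\chi(C_{1,\bullet}^{\sigma})=\tfrac12\chi(C_{1,\bullet})=-10$; combined with $H_{1,0}^{\sigma}\subseteq H_{1,0}=0$ and $H_{1,2}^{\sigma}\subseteq H_{1,2}=0$ (exactness of $(-)^{\sigma}$ in characteristic zero), this yields $\dim H_{1,1}^{\sigma}=10$ with no trace or bijection needed. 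Likewise, the vanishing of $H_{0,2}^{\sigma}$ must be checked on the tropical side rather than deduced from the determinant of $\sigma$ on the holomorphic $2$-form of the algebraic $S$: the paper exhibits the generator $\omega$ of $H_{0,2}$ as the boundary of the solid cube and notes that the antipodal map of $\RR^3$ reverses orientation, so $\sigma(\omega)=-\omega$.
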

\begin{figure}[!ht]\label{K3trop}
  \centering
    \includegraphics[width=0.45\textwidth]{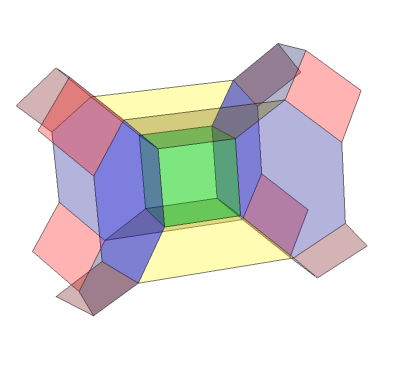}
    \caption{A tropical K3 in $\PP ^1\times \PP^1 \times \PP^1$ that is fixed under the involution}
\end{figure}

Finally we discuss an analogue of Castelnuovo's question on the tropical and
analytic level. Since the analytifications of rational varieties are
contractible by \cite[Corollary 1.1.4]{BrownFoster}, we ask the following question:
\begin{center}
    Are the analytifications of K3 or Enriques surfaces contractible?
\end{center}
We give a negative answer to this question, the counterexample being the analytification of $S$ from
Example~\ref{ref:kristinsK3:exam}.
\begin{theorem}\label{ref:topologyOfAnalytifications:theorem}
    The analytification $S^{an}$ of the K3 surface $S$ is homotopy equivalent to a
    two-dimensional sphere. The surface $S$ has a fixed-point-free involution
    $\sigma$ and the analytification of the Enriques surface $S/\sigma$
    retracts onto $\RR\PP^2$. In particular neither $S^{an}$ nor
    $(S/\sigma)^{an}$ is contractible.
\end{theorem}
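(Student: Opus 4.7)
The plan is to identify the homotopy type of $S^{an}$ with that of its tropicalization, and then to descend the analysis to the quotient by $\sigma$.

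First, I would use Example~\ref{ref:kristinsK3:exam} to get an explicit description of $\trop(S)$ as a polyhedral complex in $\RR^3$ and verify that it is homeomorphic to the two-sphere. Since $S$ is a generic $(2,2,2)$-hypersurface in $(\PP^1)^3$, its tropicalization is, combinatorially, the $2$-skeleton of the Newton subdivision of the cube $[0,2]^3$ dual to the chosen valuation; for a sufficiently generic (unimodular) triangulation one checks directly that the resulting polyhedral complex is a topological $S^2$ (this is also visible from the figure, which depicts the boundary of a polytope).

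Second, I would establish a strong deformation retraction $S^{an} \to \trop(S)$. Two approaches suggest themselves: (a) since $S$ is schön in the toric variety $(\PP^1)^3$ for generic coefficients, one can use the retraction from the analytification of a schön subvariety onto its tropicalization (in the style of Helm--Katz, extended to the toric setting); (b) since $S$ is a maximally degenerate K3 surface, one can identify $\trop(S)$ with the essential skeleton of Musta\c{t}\u{a}--Nicaise, which by the work of Nicaise--Xu on Kulikov models is homeomorphic to $S^2$ and is a deformation retract of $S^{an}$. Combined with Step~1 this yields $S^{an} \simeq S^2$.

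For the Enriques part, I would describe $\sigma$ explicitly on $(\PP^1)^3$ (it is induced by a symmetry of the coordinate torus preserving the defining equation) so that it induces a piecewise-linear involution of $\trop(S)$. Since $\sigma$ is fixed-point-free on $S$ and the retraction is $\sigma$-equivariant (the essential skeleton is a birational invariant), the induced involution on $\trop(S) \cong S^2$ has no fixed points; by the classification of such involutions it is conjugate to the antipodal map, and so $\trop(S)/\sigma \cong \RR\PP^2$. Equivariance of the retraction then gives a retraction $(S/\sigma)^{an} \to \RR\PP^2$. Non-contractibility of both spaces follows at once from $\pi_2(S^2) \neq 0$ and $\pi_1(\RR\PP^2) \neq 0$.

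The main obstacle is step two: producing a $\sigma$-equivariant deformation retraction that actually lands on the combinatorial $\trop(S)$ computed in step one. The essential skeleton approach makes equivariance automatic (it is defined intrinsically from the variety and its canonical bundle), but one must still verify that this intrinsic skeleton coincides, as a polyhedral complex with $\sigma$-action, with the tropicalization coming from the embedding in $(\PP^1)^3$. If this identification proves delicate, the fallback is to exhibit an explicit $\sigma$-equivariant snc degeneration of $S$ whose dual complex is $\trop(S)$ and invoke Berkovich's (or Thuillier's) retraction onto the dual complex of an snc model.
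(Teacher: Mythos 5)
Your overall strategy for the K3 part matches the paper's: the paper also uses the sch\"on/multiplicity-one property to realize $\trop(S)$ as a skeleton of a semistable model (via Gubler--Rabinoff--Werner) and hence as a strong deformation retract of $S^{an}$, so your approach (a) is essentially theirs. But there are two genuine problems. First, $\trop(S)$ is \emph{not} homeomorphic to a two-sphere: it is a two-dimensional polyhedral complex with unbounded cells and sedentary boundary strata (the paper counts $98$ faces, $120+96$ edges, etc.). What is true is that $\trop(S)$ admits a $\sigma$-equivariant strong deformation retraction onto the boundary of the bounded cube sitting inside it, which \emph{is} an $S^2$; you need this extra retraction step, and your argument as written conflates the two spaces. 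This is fixable, but as stated the claim ``$\trop(S)\cong S^2$'' is false and the composite retraction $S^{an}\to\trop(S)\to C\cong S^2$ is what actually yields $S^{an}\simeq S^2$.

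Second, and more seriously, your passage from the $\sigma$-equivariant retraction on $S^{an}$ to a retraction of $(S/\sigma)^{an}$ onto $\RR\PP^2$ silently assumes that $(S/\sigma)^{an}=S^{an}/\sigma^{an}$ as topological spaces. Analytification is not known to commute with finite group quotients for free; this is the content of a separate proposition in the paper, proved by showing (i) $\sigma^{an}$ acts transitively on the fibers of $q^{an}$ (via an explicit analysis of seminorms on the rank-two algebra $H^0(S_x,\cO_{S_x})\otimes\mathcal{H}(x)$ over the completed residue field) and (ii) $q^{an}$ is finite, hence proper and closed, so the quotient topology agrees. Without this identification, a $\sigma$-equivariant map $e:S^{an}\to S^2$ does not descend to a map out of $(S/\sigma)^{an}$. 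You should also note that what descends is only a retraction, not a deformation retraction: the paper points out (its Remark on this issue) that the homotopy between $se$ and the identity of $S^{an}$ is not known to be choosable $\sigma$-equivariantly, which is exactly why the theorem claims only that $(S/\sigma)^{an}$ retracts onto $\RR\PP^2$. Your fallback via essential skeletons would face the same quotient issue, plus the complication that the Nicaise--Xu machinery is set up over discretely valued fields rather than the algebraically closed valued field with dense value group used here.
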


The contents of the paper are as follows.
In Section~\ref{generalities} we give some background about Enriques
surfaces. Next, in Section~\ref{sec:constructions}, we exploit a classical construction
to obtain an Enriques ideal in a codimension $3$ linear space in $\PP
^{11}$ and prove Theorem~\ref{ref:mainthm1}.
In Section~\ref{sec:tropicalization} we discuss the basics of tropical
geometry and analytic spaces in the sense of Berkovich.
Example~\ref{ref:kristinsK3:exam} provides an Enriques surface $S/\sigma$ arising from a
K3 surface $S \subset \PP^1\times \PP^1\times \PP^1$ with an involution $\sigma$.
The surface $S$ is suitable from the tropical point of view (its tropical variety
is sch\"on and multiplicity one everywhere) and is used throughout the paper.
In Section~\ref{Homology}, we compute the tropical homology groups of
$\trop(S)$ and, conjecturally, of $\trop(S/\sigma)$. We also prove
Proposition~\ref{ref:descriptionoftropK3:prop}.
In Section~\ref{sec:analytic} we discuss the topology of analytifications of
$S$ and $S/\sigma$ and prove
Theorem~\ref{ref:topologyOfAnalytifications:theorem}.

\section{Background}\label{generalities}

Apart from the code snippets, we work over an algebraically closed field $\kk$ of
characteristic zero.
An \emph{Enriques surface} $X$
is a smooth projective surface with $q(X) := h^1(X, \mathcal{O}_X) = 0$,
$\omega_X^{\otimes 2}  \simeq  \mathcal{O}_{X}$ and $\omega_{X} \not\simeq
\mathcal{O}_X$, where $\omega_X=\bigwedge^2\Omega^1_X$ is the canonical bundle
of $X$. Then it follows that $X$ is minimal, see \cite{beauville_book}, and
$p_g(X):=h^2(X, \mathcal{O}_X)=0$. We note that Enriques surfaces are defined
the same way over any field of characteristic other than $2$.
By~\cite[Lemma~15.1]{bhpv} the Hodge diamond of an Enriques surface $X$ is:
\begin{equation}
    \begin{tikzpicture}
        \matrix (m) [matrix of math nodes, row sep=0em, column sep=0.5em]
        {
&&h^{0,0}&& & &&&1&&\\
&h^{1,0}&&h^{0,1}& & &&0&&0&\\
h^{2,0}&&h^{1,1}&&h^{0,2} & = & 0 &&10&&0\\
&h^{2,1}&&h^{1,2}& & &&0&&0&\\
&&h^{2,2}&& & &&&1&&.\\
        };
    \end{tikzpicture}
\end{equation}

An Enriques surface admits an unramified double cover
$f:Y\to X$, where $Y$ is a K3 surface, see~\cite[Lemma~15.1]{bhpv}
or~\cite[Proposition~VIII.17]{beauville_book}.
The Hodge diamond of $Y$ is given by

\begin{equation}
    \begin{tikzpicture}
        \matrix (m) [matrix of math nodes, row sep=0em, column sep=0.5em]
        {
&&h^{0,0}&& & &&&1&&\\
&h^{1,0}&&h^{0,1}& & &&0&&0&\\
h^{2,0}&&h^{1,1}&&h^{0,2} & = & 1 &&20&&1\\
&h^{2,1}&&h^{1,2}& & &&0&&0&\\
&&h^{2,2}&& & &&&1&&.\\
        };
    \end{tikzpicture}
\end{equation}

In particular since $Y$ is simply-connected, the
fundamental group of an Enriques surface is $\mathbb{Z}/2\mathbb{Z}$, see
\cite[Section~15]{bhpv}.
The cover $Y\to X$ is in fact a quotient of $Y$ by an involution $\sigma$, which
exchanges the two points of each fiber.
Conversely, for a K3 surface $Y$ with a fixed-point-free involution $\sigma$
the quotient $Y/\sigma$ is an Enriques surface. An example of this procedure,
known as Horikawa's construction, appears in the quote at the beginning of the
paper.

\section{Enriques surfaces via K3 complete intersections in $\PP^5$}\label{sec:constructions}

    In this section we construct Enriques surfaces via K3
    surfaces in $\PP^5$.
    Before we go into the details, we remark that one cannot hope for easy
    equations, for example an Enriques surface cannot be a hypersurface in
    $\mathbb{P}^3$.
    \begin{proposition}\label{ref:nosimpleequations:prop}
        Let $X \subset \PP^N_{\CC}$ be a smooth projective toric threefold and $S =
        X\cap H$ be a smooth hyperplane section. Then $S$ is simply-connected. In
        particular it is not an Enriques surface.
    \end{proposition}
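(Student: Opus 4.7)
The plan is to apply the Lefschetz hyperplane theorem together with the fact that every smooth complete toric variety is simply-connected, and then invoke the Hodge diamond displayed in Section~\ref{generalities}.

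First I would set up the Lefschetz step. Since $X\subset \PP^N_{\CC}$ is a smooth projective threefold and $H$ is a hyperplane in the given embedding, the divisor $X\cap H$ is cut out on $X$ by a very ample (hence ample) line bundle. By assumption $S=X\cap H$ is smooth, so the homotopical Lefschetz hyperplane theorem applies: the inclusion $S\hookrightarrow X$ induces isomorphisms on $\pi_i$ for $i<\dim_{\CC} X-1=2$ and a surjection on $\pi_{\dim_{\CC} X-1}$. In particular, $\pi_1(S)\xrightarrow{\sim}\pi_1(X)$.

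Next I would argue $\pi_1(X)=1$. This is the standard fact that smooth complete toric varieties are simply-connected, for which I would cite a reference such as Fulton's book on toric varieties. The cleanest self-contained route is the Bia\l ynicki--Birula decomposition: a generic one-parameter subgroup of the dense torus acts on $X$ with isolated fixed points, and the associated plus-decomposition yields a CW-structure on $X$ whose cells are all even-dimensional affine spaces. A CW-complex with no odd-dimensional cells is simply connected by cellular approximation. Combining with the previous step, $\pi_1(S)=1$.

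Finally, a simply-connected surface cannot be an Enriques surface: as recalled in Section~\ref{generalities}, an Enriques surface has fundamental group $\ZZ/2\ZZ$, since it is covered $2:1$ by a simply-connected K3 surface.

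The only point that requires any care is the simply-connectedness of $X$, since the Lefschetz theorem and the contradiction with the Hodge diamond are formal. I would expect the referee to accept a citation here rather than a reproof of the toric fact; if not, the Bia\l ynicki--Birula argument above is short enough to include as a remark.
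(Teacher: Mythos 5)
Your proposal is correct and follows essentially the same route as the paper: the homotopical Lefschetz hyperplane theorem gives $\pi_1(S)\cong\pi_1(X)$, simple-connectedness of smooth projective toric varieties (the paper also cites Fulton, \S3.2) gives $\pi_1(X)=1$, and the nontrivial \'etale K3 double cover of an Enriques surface yields the contradiction. Your Bia\l ynicki--Birula remark is a fine self-contained substitute for the toric citation, but it is only an alternative proof of that one sub-fact, not a different overall argument.
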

    \begin{proof}
        Since $X$ is a smooth and projective toric variety, it is simply
        connected by~\cite[\S3.2]{Fulton_toricvars}. Now a homotopical version
        of Lefschetz' theorem (\cite{barthlarsen} see also
        \cite[2.3.10]{sommese_adjunction}) asserts that the fundamental groups of $X\cap H$
        and $X$ are isomorphic via the natural map. Thus $S$ is
        simply connected.
        Now suppose $S$ is an Enriques surface. Then it admits a non-trivial \`etale
        double cover $K\to S$, thus it is not simply connected, which
        is a contradiction.\qed
    \end{proof}
    We remark that this proof generalizes to other complete intersections
    inside smooth toric varieties, provided that intermediate complete
    intersections are smooth as well.

    \medskip

    We now construct an Enriques
    surface from a K3 surface which is an intersection of quadrics in
    $\PP^5$. We follow Beauville~\cite[Example VIII.18]{beauville_book}.

    Fix a projective space $\mathbb{P}^5$ with coordinates $x_0$, $x_1$, $x_2$,
    $y_0$, $y_1$, $y_2$.
    Consider the involution $\sigma:\mathbb{P}^5\to \mathbb{P}^5$ given by
    $\sigma(x_i) = x_i$ and $\sigma(y_i) = -y_i$ for $i=0,1,2$. Then the fixed
    point set is equal to the union of $\mathbb{P}^2 = V(y_0, y_1, y_2)$ and
    $\mathbb{P}^2 = V(x_0, x_1, x_2)$.

    Fix quadrics $F_i\in \CC[x_0, x_1, x_2]$ and $G_i\in \CC[y_0, y_1, y_2]$,
    where $i=0, 1, 2$ and denote $Q_i := F_i + G_i$. By their construction,
    these quadrics are fixed by $\sigma$. We henceforth choose $Q_i$ so that
    they give a complete intersection.
    Then $S = S_{\bQ}:= V(Q_0, Q_1, Q_2)$ is a surface and, 
    by the Adjunction Formula, we have $K_S = \cO_S(-6+2+2+2) = \cO_S$. 
    It can also be shown that since the surface $S$ is a complete intersection 
    of quadrics in $\PP^5$,  
    it has $h^1(\cO_S) = 0$, see \cite[Lemma VIII.9]{beauville_book}. Thus
    if $S$ is smooth, then it is a K3 surface fixed under the involution $\sigma$.
    We will now formalize exactly which assumptions must be satisfied by 
    the three quadrics to obtain a smooth Enriques surface.
    \begin{definition}
        Let $\bQ = (Q_0, Q_1, Q_2)$ be a triple of quadrics $Q_i = F_i + G_i$
        for $F_i\in \CC[x_i]$ and $G_i\in \CC[y_i]$ as before.
        We say that the quadrics $\bQ$ are
        \emph{enriquogeneous}  if the following conditions
        are satisfied:
        \begin{enumerate}
            \item the forms $\bQ = (Q_0, Q_1, Q_2)$ are a complete
                intersection,
            \item the surface $S = V(Q_0, Q_1, Q_2)$ is smooth,
            \item the surface $S = V(Q_0, Q_1, Q_2)$ does not intersect the fixed-point set of
                $\sigma$.
        \end{enumerate}
    \end{definition}
    We note that the third condition is equivalent to $F_1,F_2,F_3$ having no common
    zeros in $\CC[x_0, x_1, x_2]$ and $G_i$ having no common zeros in $\CC[y_0,
    y_1, y_2]$, so it is open.
    We know that for a choice of enriquogeneous quadrics $\bQ$ we obtain an Enriques
    surface as $S_{\bQ}/\sigma$.  The set of enriquogeneous quadrics is open
    inside $(\A^6)^6$, so that a \emph{general} choice of forms gives
    an Enriques surface.  In~\cite{C83}, Cossec proves that every complex
    Enriques surface may be obtained as above if one allows $\bQ$ not satisfying the smoothness condition, see also~\cite{Verra_doubleCover}.
    Notably, Lietdke recently proved that the same is true for Enriques surfaces over any characteristic~\cite{liedtke2015}.
    To give some intuition for the complex result, let us prove that over the complex numbers these surfaces give at most a
    $10$-dimensional space of Enriques surfaces.

    Notice that each $Q_i$ is chosen from the same $12$-dimensional affine
    space and $S_{\bQ}$ depends only on their span, which is an element of
    $\Gr\left(3, \mathbb{C}^{12}\right)$. This is a $27$-dimensional variety.
    However, since we have fixed $\sigma$, the quadrics $Q_i$ will give an isomorphic K3 surface
    (with an isomorphic involution) if we act on $\PP^5$ by an automorphism
    that commutes with $\sigma$.
    Such automorphisms are given by block matrices in $PGL(6)$ of the form
    \begin{equation}
        C=\begin{pmatrix}A&0\\0&B \end{pmatrix}\quad\mbox{or}\quad C=\begin{pmatrix}0&A\\B&0 \end{pmatrix}
    \end{equation}
    where $A$ and $B$ are matrices in $GL(3)$, up to scaling. Thus, the space of automorphisms preserving the
    $\sigma$-invariant quadrics has dimension $2\cdot9-1=17$. Modulo these automorphisms,
    we now have a $10$-dimensional projective space of K3 surfaces with an involution.
    Note that the condition that $\bQ$ be enriquogeneous is an open condition. 

    \medskip

    We now aim at making the Enriques surfaces obtained above as $S_{\bQ}/\sigma$
    explicit. In other words we want to present them as embedded into a
    projective space.

    The first step is to identify the quotient of $\mathbb{P}^5$ by the
    involution $\sigma$. Let $S = \CC[x_0, x_1, x_2, y_0, y_1, y_2]$ be the
    homogeneous coordinate ring. Then the quotient is $\Proj
    \left(S^{\sigma}\right) = \Proj\left( \CC[x_i, y_iy_j] \right)$. The
    Enriques $S_{\bQ}$ is cut out of $\Proj\left( \CC[x_i, y_iy_j] \right)$ by
    the quadrics $\bQ$, so that
    \begin{equation}
        S_{\bQ} = \Proj\left( \CC[x_i, y_iy_j]/\bQ \right).
    \end{equation}
    This does not give us an embedding into $\mathbb{P}^8$, since the
    variables $x_i$ and $y_iy_j$ have different degrees. Rather we obtain an
    embedding into a weighted projective space $\mathbb{P}(1^3, 2^6)$.
    Therefore we replace $\CC[x_i, y_iy_j]$ by the Veronese subalgebra
    \begin{equation}
        S_{\bQ} \simeq \Proj\left( \CC[x_ix_j, y_iy_j]/\bQ \right).
    \end{equation}
    This algebra is generated by $12$ elements $x_ix_j$, $y_iy_j$ for
    $i,j=0,1,2$, so that $S_{\bQ}$ is embedded into a $\mathbb{P}^{11}$. The
    relations $\bQ$ are \emph{linear} in the variables $x_ix_j$ and $y_iy_j$, so
    that $S_{\bQ}$ is embedded into a $\mathbb{P}^8$.

    Let us rephrase this geometrically.
    Consider the second Veronese re-embedding
    ${v:\mathbb{P}^5 \to \mathbb{P}^{20}}$. The coordinates of
    $\mathbb{P}^{20}$ are forms of degree two in $x_i$ and $y_i$. The
    involution $\sigma$ extends to an involution on $\mathbb{P}^{20}$ and this
    time the invariant coordinate ring is generated by the \emph{linear forms}
    corresponding to products $x_ix_j$ and $y_iy_j$.
    Therefore the quotient is embedded into $\mathbb{P}^{11}$, which has
    coordinate ring corresponding to those $12$ forms.

    \begin{equation}
        \begin{tikzpicture}
            \matrix (m) [matrix of math nodes, row sep=2em, column sep=2.5em]
            {
                \mathbb{P}^5  & \mathbb{P}^{20} \\
                & \mathbb{P}^{11} \\
            };
            \path[->,font=\scriptsize]
            (m-1-1) edge node[auto] {$v$} (m-1-2)
                    edge node[auto,swap] {$\pi$} (m-2-2);
            \path[->,dashed,font=\scriptsize]
            (m-1-2) edge node[auto] {} (m-2-2);
        \end{tikzpicture}
    \end{equation}
    where $\pi$ denotes the quotient by the involution $\sigma$. Then
    the image $\pi(\mathbb{P}^5)$ is cut out by $12$ binomial quadrics, which
    are the $6$ usual equations between $x_ix_j$ and the $6$ corresponding equations
    for $y_iy_j$.
    It is the join of two Veronese surfaces which constitute its singular locus.
    Quadrics in $\CC[x_i, y_i]$ which have the form $F_i + G_i$ for $F_i\in
    \CC[x_i]$ and $G_i\in \CC[y_i]$ correspond bijectively to linear forms on
    the above $\mathbb{P}^{11}$.
    A choice of enriquogeneous quadrics $\bQ$ corresponds to a general choice of
    three linear forms on $\mathbb{P}^{11}$. We obtain the corresponding Enriques
    surface $S_{\bQ}$ as a linear section of $\pi(\mathbb{P}^5)$. Summing up,
    we have the following chain of inclusions
    \begin{equation}
        V \cap \pi(\PP^5) \subset \pi(\PP^5) \subset \PP(1^3, 2^6) \subset
        \PP^{11}
    \end{equation}
    where $V$ is a codimension three linear section. Note that although $V\cap
    \pi(\PP^5)$ is a complete intersection in $\pi(\PP^5)$, this is not
    contradictory to (a natural generalisation of)
    Proposition~\ref{ref:nosimpleequations:prop}, because $\pi(\PP^5)$ is
    singular. Note also that sufficiently ample embeddings of varieties are
    always cut out by quadrics, see~\cite{MumfordQuadrics, SmithSidman}, so
    this suggests that our embedding is sufficiently good.

\begin{proof}[Theorem~\ref{ref:mainthm1}]
    The surfaces obtained from enriquogeneous quadrics are arithmetically
    Cohen-Macaulay of degree $16$ as they are linear sections of $\pi(\PP^5)$
    possessing those properties. Every Enriques surface can be obtained by
    this procedure if one allows $\bQ$ not satisfying the
    smoothness condition by~\cite{C83}.\qed
\end{proof}

    Below we provide (very basic) \emph{Macaulay2} \cite{M2} code for obtaining the equations of
    $S_{\bQ}$ explicitly, using the above method.
    We could take any field as $\mathtt{kk}$; we use a finite field to take random elements.

\begin{verbatim}
kk = ZZ/1009;
P5 = kk[x0,x1,x2,y0,y1,y2];
P11 = kk[z0,z1,z2,z3,z4,z5,z6,z7,z8,z9,z10,z11];
pii = map(P5, P11, {x0^2, x0*x1, x0*x2, x1^2, 
                    x1*x2, x2^2, y0^2, y0*y1, 
                    y0*y2, y1^2, y1*y2, y2^2});
\end{verbatim}
We can verify that the kernel of \texttt{pii} is generated by 12 binomial
quadrics and has degree $16$.
\begin{verbatim}
assert(kernel pii ==
ideal(z10^2-z9*z11, z8*z10-z7*z11, z8*z9-z7*z10,
      z8^2-z6*z11,  z7*z8-z6*z10,  z7^2-z6*z9,
      z4^2-z3*z5,   z2*z4-z1*z5,   z2*z3-z1*z4,
      z2^2-z0*z5,   z1*z2-z0*z4,   z1^2-z0*z3))
assert(degree kernel pii == 16)
\end{verbatim}
Now we generate an Enriques from a random set of linear forms named \texttt{linForms}.
To see the quadrics in $\PP^5$ take \texttt{pii(linForms)}.
\begin{verbatim}
linForms = random(P11^3, P11^{-1})

randomEnriques = (kernel pii) + ideal linForms
\end{verbatim}
We now check whether it is in fact an Enriques. Computationally it is much
easier to check this for the associated K3 surface, 
since we need only check that \texttt{K3} is a smooth surface (first two assertions below) and that the involution is fixed-point-free on \texttt{K3} (last two assertions).
\begin{verbatim}
K3 = ideal pii(linForms)
assert (dim K3 == 3)
assert (dim saturate ideal singularLocus K3 == -1)
assert (dim saturate (K3 + ideal(y0,y1,y2)) == -1)
assert (dim saturate (K3 + ideal(x0,x1,x2)) == -1)
\end{verbatim}
If the \texttt{K3} passes all the assertions, then \texttt{randomEnriques} is
an Enriques surface. Its ideal is given by 12 binomial quadrics listed above
and three linear forms in \texttt{P11}.

\goodbreak
\begin{example}\label{ex:explicitinP5}
    Over $\kk = \mathbb{F}_{1009}$ the choice of
\begin{verbatim}
linForms = matrix{{2*z2+z6+5*z7+8*z11,
                   2*z0+8*z4+z9,
                   5*z1+4*z3+4*z5+6*z8}}
\end{verbatim}
in the above algorithm gives an Enriques surface.
\end{example}
Finally, we check that $\pi(\PP^5)$ is arithmetically Cohen-Macaulay. Using
\texttt{betti res kernel pii} we obtain its Betti table.
\begin{equation}
    \begin{bmatrix}
        1&\text{.}&\text{.}&\text{.}&\text{.}&\text{.}&\text{.}\\
        \text{.}&12&16&6&\text{.}&\text{.}&\text{.}\\
        \text{.}&\text{.}&36&96&100&48&9\\
    \end{bmatrix}
\end{equation}
The projective dimension of $\pi(\PP^5)$ (the number of
columns) is equal to the codimension, thus $\pi(\PP^5) \subset \PP^{11}$ is
arithmetically Cohen-Macaulay, see~\cite[Section~10.2]{Schenck_CAG}. Therefore
all its linear sections are also arithmetically Cohen-Macaulay.

\section{Analytified and tropical Enriques surfaces}\label{sec:tropicalization}

    The aim of this section is to discuss the basics of tropical and analytic
    geometry and to construct a K3 surface, whose tropicalization is nice
    enough for computations of tropical homology.
    This is done in Example~\ref{ref:kristinsK3:exam}; we obtain
    a K3 surface with an involution, which on the tropical side is the
    antipodal map.

    As an excellent reference for tropical varieties we
    recommend~\cite{Maclagan_Sturmfels_tropibook}, especially Section~6.2.
    For analytic spaces in the sense of Berkovich we recommend~\cite{Berkovich,
    Gubler_Rabinoff_Werner}.
\begin{figure}
  \centering
    \includegraphics[width=0.5\textwidth]{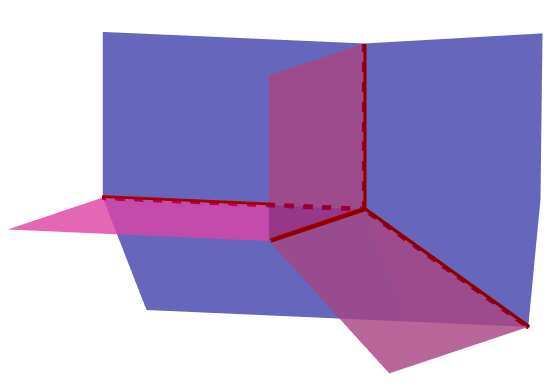}
      \caption{A tropicalization of $\PP^2$. The red segments mark the intersection lines between planes}
      \label{tropplane}
\end{figure}

    Let $\CC \subset \kk$ be a field extension and suppose that $\kk$ has a
    non-trivial valuation $\nu:\kk^* \to \mathbb{R}$ such that $\nu(\CC^*) = \{0\}$.
    Suppose further that $\kk$ is algebraically closed, so that $\nu(\kk^*)$ is dense in $\mathbb{R}$.
    Without much loss of generality one can take $\kk = \CC\{\{z\}\} =
    \bigcup_{n\in \mathbb{N}} \CC((z^{1/n}))$, the field of Puiseux series,
    with valuation yielding the lowest exponent of $z$ appearing in the series.

    For every point $p = (p_1, \ldots , p_n)\in (\kk^*)^n$ its valuation is $\nu(p) =
    (\nu(p_1), \ldots , \nu(p_n))$.

    \begin{definition}\label{ref:tropicalviavals:def}
        Let $X$ be a toric variety with torus $(\kk^*)^n$ and $Y \subset X$ be
        a closed subvariety.  The tropical variety of $Y$ is the closure of
        the set
        \begin{equation}
            \left\{ \nu(p) \ |\ p\in (\kk^*)^n \cap Y \right\} \subset
            \mathbb{R}^n,
        \end{equation}
        we denote it by $\trop(Y \subset X)$ or briefly $\trop(Y)$.
    \end{definition}
    We remark that $\trop(Y \subset X)$ is a polyhedral complex of dimension
    $\dim Y$ and has rich combinatorial structure, see~\cite[Chapter
    3]{Maclagan_Sturmfels_tropibook} and references therein.

    Now we discuss tropicalized maps.
    A morphism of tori $\varphi:(\kk^*)^n \to (\kk^*)^m$ is given by
    $\varphi = (\varphi_1,  \ldots , \varphi_m)$ where
    $\varphi_i(t) = b_i\cdot t^{a_i}$ for $i=1, \ldots ,m$.
    For each such  $\sigma$ there is a \emph{tropicalized map} $\trop(\varphi):\RR^n \to \RR^m$
    given by
    \begin{equation}\label{eq:tropmaps}
        \trop(\varphi)_i(v) = \nu(b_i) + (a_i\cdot v) \qquad i=1, \ldots ,m.
    \end{equation}
    One can check that the following diagram commutes:
    \begin{equation}
        \begin{tikzpicture}
            \matrix (m) [matrix of math nodes, row sep=2em, column sep=2.5em]
            {
                {(\kk^*)^n} & (\kk^*)^m\\
                \RR^n & \RR^m\\
            };
            \path[->,font=\scriptsize]
            (m-1-1) edge node[auto] {$\varphi$} (m-1-2)
                    edge node[auto] {$\nu$} (m-2-1)
            (m-1-2) edge node[auto] {$\nu$} (m-2-2)
            (m-2-1) edge node[auto] {$\trop(\varphi)$} (m-2-2);
        \end{tikzpicture}
    \end{equation}
    The reason for the existence of this map is that one can compute
    the valuation $\nu$ of $\varphi(t)$ by knowing only the valuation of $t$.

    A notable problem of tropical varieties is that it is known how to tropicalize a map only when it is monomial; in this sense, the naive tropicalization is not a functor.
    This problem is removed once one passes to Berkovich
    spaces.
    We will not discuss Berkovich spaces in detail: we invite the reader to see~\cite{Berkovich,
    Gubler_Rabinoff_Werner} or ~\cite{Payne_limit} for a slightly more elementary
    introduction.

    For every finite-type scheme $X$ over a valued field
    $\kk$, its Berkovich analytification $X^{an}$ is the
    analytic space (see~\cite[Chapter~3]{Berkovich}) which best approximates $X$.
    The space $X^{an}$ is locally ringed (in the usual sense, see~\cite[4.3.6]{vakil}) and there is a morphism
    $\pi:X^{an} \to X$ such that every other map
    from an analytic space  factors through $\pi$.
    If $X = \Spec A$ is affine, then the points of $X^{an}$ are in bijection with the
    multiplicative semi-norms on $A$ which
    extend the norm on $\kk$.
    Most importantly the analytification is \emph{functorial}: for every map $f:X\to Y$ we get
    an induced map
    \begin{equation}\label{eq:mapsanalitify}
        f^{an}:X^{an} \to Y^{an}.
    \end{equation}
    If $X = \Spec A$ and $Y = \Spec B$ are affine, then $f$ induces
    $f^{\#}: B\to A$ and the map $f^{an}$ takes
    a seminorm $|\cdot|$ on $A$ to the seminorm $b\to |f^{\#}(b)|$ on $B$.

    The analytification of an affine variety $X$ is
    the limit of its tropicalizations by~\cite{Payne_limit}.
    Namely let $X$ be an affine variety and consider its embeddings
    $i:X\to\mathbb{A}^n$ into affine spaces.
    For any two embeddings $i:X\to \mathbb{A}^n$ and $j:X\to \mathbb{A}^m$ and
    a toric morphism $\varphi:\mathbb{A}^n \to \mathbb{A}^m$ satisfying $j =
    \varphi\circ i$ we get by~\eqref{eq:tropmaps} a tropicalized map
    $\trop(X \subset \mathbb{A}^n) \to \trop(X \subset \mathbb{A}^m)$.
    For every embedding $X \subset \mathbb{A}^n$ there is an associated map
    \begin{equation}\label{eq:fromanal}
        X^{an} \to \trop(X \subset \mathbb{A}^n),
    \end{equation}
    which maps a multiplicative seminorm $|\cdot|$ to the associated valuation
    $-\log|\cdot |$, see~\cite[pg. 544]{Payne_limit}.
    The main result of~\cite{Payne_limit} is that the inverse limit is
    homeomorphic to the Berkovich analytification via the limit of maps
    defined in~\eqref{eq:fromanal} above. Hence one has:
    \begin{equation}
    X^{an} = \varprojlim \trop({X \subset \mathbb{A}^n}).
    \end{equation}

    We now return to the case of Enriques surfaces.
    We are interested in finding an Enriques surface $S/\sigma$ with a K3 cover
    $S$ suitable for tropicalization. Specifically we would like $\sigma$ to
    be an involution acting without fixed points on the tropical side.
    In this sense the examples obtained as in Sect. \ref{sec:constructions} are not suitable.
    \begin{example}
        Let us consider the K3 surface $S_{\bQ}$ defined using enriquogeneous
        quadrics in Section~\ref{sec:constructions} with
            $\sigma(x_0, x_1, x_2, y_0, y_1, y_2) = (x_0, x_1, x_2, -y_0,
            -y_1, -y_2)$.
            Since $\nu(-1) = 0$, the tropicalized involution
            $\trop(\sigma)$, defined by Equation~\eqref{eq:tropmaps}, is the identity map on $\RR^6$.
    \end{example}

    To obtain a K3 surface with an involution $\sigma$ tropicalizing to a
    fixed-point free involution, we consider embeddings into products of $\PP^1$.
    Consider the involution $\tau:\mathbb{P}^1 \to \mathbb{P}^1$ given by
    $\tau([x:y]) = [y:x]$ and the involution $\sigma:(\mathbb{P}^1)^3\to
    (\mathbb{P}^1)^3$ given by applying $\tau$ to every coordinate.
    The map $\tau$ restricts to the torus $\CC^*$ and is given by $\CC^* \ni t\to
    t^{-1}\in \CC^*$. Therefore $\trop(\tau)(v) = -v$ by Equation~\eqref{eq:tropmaps}.
    Consequently the tropicalization $\trop(\sigma):\mathbb{R}^3\to
    \mathbb{R}^3$ is given by
    \begin{equation}
        \trop(\sigma)(v) = -v.
    \end{equation}
    This map is non-trivial and has only one fixed point.

    \begin{example}[An example of a K3 surface with a fixed-point-free
        involution]\label{ref:kristinsK3:exam}
        Let $\kk$ be an algebraically closed field with a nontrivial valuation
        $\nu:\kk^*\to \mathbb{R}$, for example $\kk = \CC\{\{z\}\}$.

        Let $S \subset \mathbb{P}^1_{\kk} \times \mathbb{P}^1_{\kk} \times
        \mathbb{P}^1_{\kk}$ be a smooth surface given by a section of the
        anticanonical divisor of $(\mathbb{P}^1_{\kk})^3$, 
        i.e., a triquadratic polynomial.
        We remark that the Newton polytope of $S$ is the $3$-dimensional cube
        $[0, 2]^3$.
        We introduce the following assumptions on $S$:
        \begin{enumerate}
            \item $S$ is smooth;
            \item $S$ is invariant under the involution $\sigma$;
            \item the subdivision induced by $S$ on its Newton polytope $[0,
                2]^3$ is a unimodular triangulation, that is, the polytopes in
                the triangulation are tetrahedra of volume equal to
                $1/6$, see~\cite[pg. 13]{Maclagan_Sturmfels_tropibook} for
                details.
        \end{enumerate}
        Each such $S$ is a K3 surface.
        Under our assumptions the point $(0, 0, 0)$ is not in the tropical
        variety of $S$. Indeed, if it was in $\trop(S)$, that variety would not be
        locally linear at $(0, 0, 0)$. But $\trop(S)$ is coming from a
        unimodular triangulation, so it is locally linear everywhere. Hence
        $(0, 0, 0)$ is outside and so $\trop(\sigma)$ is a fixed-point-free
        involution on $\trop(S)$.
        
\begin{figure}
      \vspace{-1em}
  \centering
    \includegraphics[width=0.45\textwidth]{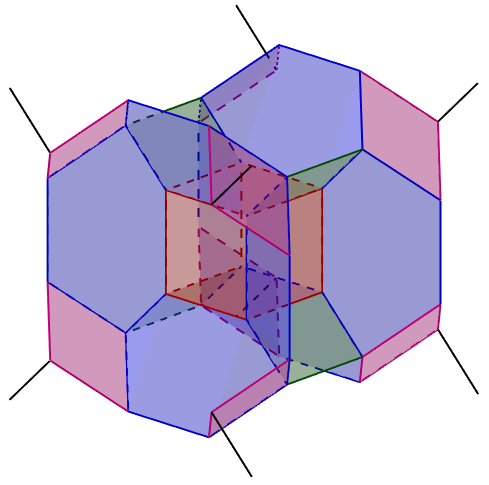}
    \caption{The bounded part of a tropical K3 in $\PP ^1\times \PP^1 \times \PP^1$}
      \label{K3}
      \vspace{-1.5em}
\end{figure}

        By Equation~\eqref{eq:mapsanalitify} map $\sigma:S\to S$
        induces also an involution $\sigma^{an}:S^{an}\to S^{an}$ which is compatible
        with $\trop(\sigma)$ under the projection $\pi$ defined in
        Equation~\eqref{eq:fromanal}; the following diagram commutes.
        \begin{equation}%
            \begin{tikzpicture}%
                \matrix (m) [matrix of math nodes, row sep=2em, column sep=2.5em]
                {
                    S^{an} &  S^{an}\\
                    \trop(S) & \trop(S)\\
                };
                \path[->,font=\scriptsize]
                (m-1-1) edge node[auto] {$\sigma^{an}$} (m-1-2)
                        edge node[auto] {$\pi$} (m-2-1)
                (m-1-2) edge node[auto] {$\pi$} (m-2-2)
                (m-2-1) edge node[auto] {$\trop(\sigma)$} (m-2-2);
            \end{tikzpicture}%
        \end{equation}
    \end{example}

\section{The tropical homology}\label{Homology}

The plan of this section is an explicit calculation of the tropical homology
of a tropical K3 surface and a tropical Enriques surface. We intend to use the
construction in Example~\ref{ref:kristinsK3:exam} in order to obtain
tropicalizations which are locally linear (locally look like
tropicalizations of linear spaces), and then compute their tropical cohomology
groups. In accordance to the results in \cite{IKMZ}, the dimensions of such
homology groups should coincide with the Hodge numbers of the surfaces
themselves. We carry out the calculation by hand for some curves, a
tropical K3 and also for an object, which we believe to be the associated tropical
Enriques. See~\cite{Kristin} for computation of tropical homology using
\emph{Polymake}.


\begin{theorem}[{\cite[Special case of
    Theorem~2]{IKMZ}}]\label{ref:homologyagrees}
    Let $X \subset \PP^N$. Suppose
    that $\trop(X) \subset \trop(\PP^N)$ has multiplicities all equal to $1$ and that it is locally
    linear. Then the tropical Hodge numbers agree with Hodge numbers of $X$:
    \begin{equation}
        \dim H_{p, q}(\trop(X)) = \dim H^{p, q}(X, \RR).
    \end{equation}
\end{theorem}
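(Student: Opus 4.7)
My plan is to relate the two sides through a toric degeneration of $X$ driven by the tropical data, then apply a Hodge-theoretic spectral sequence whose $E_1$-page is tropical. First I would build, from a regular unimodular subdivision realizing $\trop(X)$, a semistable degeneration $\mathcal{X}\to \Spec \cO_K$ over a DVR whose generic fiber is $X_K$ and whose special fiber $X_0=\bigcup_{v}X_v$ is a simple normal crossings union of smooth projective toric varieties, one $X_v$ for each vertex $v$ of $\trop(X)$, with strata $X_\tau:=\bigcap_{v\in \tau} X_v$ indexed by cells $\tau$ of $\trop(X)$. The standing hypotheses, that $\trop(X)$ is locally linear and has all multiplicities equal to one, are exactly what is needed to ensure the degeneration is semistable with reduced special fiber and that each stratum $X_\tau$ is a smooth toric variety whose fan is read off from $\mathrm{star}(\tau)$ in $\trop(X)$.

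Second, I would feed $X_0$ into the Steenbrink spectral sequence computing the limit mixed Hodge structure $H^\bullet_{\lim}(X)$ associated with this degeneration. Its $E_1$-page is built from the cohomologies $H^\bullet(X_\tau)$ of strata, and because each $X_\tau$ is a smooth projective toric variety, those cohomologies are explicitly combinatorial: they decompose as direct sums over the faces of the corresponding fan, with Hodge type $(k,k)$ contributions only. After regrouping by the weight filtration, the $(p,q)$-piece of the limit mixed Hodge structure becomes a chain complex supported on the cells of $\trop(X)$, with coefficients in certain exterior powers of the lattices tangent to each cell.

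Third, I would identify this spectral-sequence data with the tropical chain complex defining $H_{p,q}(\trop(X))$. By construction, the multi-tangent sheaves $\mathcal{F}_p$ used to compute tropical homology assign to a cell $\tau$ the exterior power $\bigwedge^p \bigl(\sum_{\tau\subset\sigma} T_\sigma\bigr)$ of the spanned lattice directions, which matches exactly the graded pieces of cohomologies of the toric strata $X_\tau$ appearing in the Steenbrink $E_1$. The combinatorial boundary maps, signed sums over codimension-one incidences of cells, also agree under this identification.

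Fourth, I would conclude using that $X$ is smooth and projective, so its Hodge structure is pure of weight $n=\dim H^n$ and the monodromy of the family is (up to rescaling) unipotent but the associated graded of the weight filtration recovers the usual Hodge decomposition; hence $\dim H^{p,q}(X,\RR)$ equals the dimension of the corresponding graded piece, which by the previous step is $\dim H_{p,q}(\trop(X))$. The principal obstacle is the third step: matching the Steenbrink differentials with the tropical boundary maps requires a careful bookkeeping of sign conventions, choices of orientations on cells, and the compatibility of the Gysin and restriction maps between strata with the lattice-theoretic inclusions $\mathcal{F}_p(\sigma)\hookrightarrow \mathcal{F}_p(\tau)$ for $\tau\subset\sigma$. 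Everything else is either standard toric geometry or an application of well-known Hodge-theoretic machinery, once the degeneration has been set up properly using the multiplicity-one and local linearity hypotheses.
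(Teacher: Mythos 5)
You should first be aware that the paper contains no proof of this statement: it is quoted as a special case of Theorem~2 of \cite{IKMZ}, and the authors only \emph{verify} it by direct computation in their running examples (the del Pezzo surface of Section~\ref{sec:delPezzo} and the K3 surface of Example~\ref{ref:kristinsK3:exam}, culminating in Proposition~\ref{ref:hodgeagreeK3:prop}). So there is no internal proof to compare against; what you have written is, in outline, the strategy of the cited proof itself -- a semistable (tropical/Mumford) degeneration whose special fiber has dual complex $\trop(X)$ and toric strata, the Steenbrink spectral sequence for the limit mixed Hodge structure, and an identification of its first page with the tropical chain complex $C_{p,\bullet}$. In that sense your route is the right one and matches the source the paper relies on.

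As a proof, however, your sketch defers exactly the steps that carry the content. (1) The existence of the semistable model with the stated strata is immediate only for hypersurfaces with a regular unimodular triangulation; for a general $X\subset\PP^N$ the hypotheses ``locally linear and multiplicity one'' must be converted into sch\"onness and a unimodular polyhedral structure before Mumford's construction applies, and you do not say why each $X_\tau$ is the smooth projective toric variety you claim. (2) The identification of the weight-graded pieces of the Steenbrink $E_1$ with $\mathcal{F}_p(\tau)=\bigwedge^p$ of the tangent spans, \emph{and} of the differentials (alternating sums of restriction and Gysin maps) with the tropical boundary maps, is the theorem of \cite{IKMZ}, not bookkeeping; without it the argument is circular. (3) Your fourth step is stated incorrectly: the limit mixed Hodge structure is not pure, and its $(p,q)$-components are not $H^{p,q}(X)$. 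The correct bridge is that $\dim F^p H^n$ is constant in the family, so $\dim \mathrm{gr}_F^p H^{p+q}_{\mathrm{lim}} = h^{p,q}(X_t)$, combined with the fact that each weight-graded piece is of Hodge--Tate type because the strata are toric; one then matches $\mathrm{gr}_F^p H^{p+q}_{\mathrm{lim}}$ with $H_{p,q}(\trop(X))$, which also requires the relevant spectral sequence to degenerate where needed (compare the discussion after Corollary~2 in \cite{IKMZ} quoted in Section~\ref{Homology}). If your aim is to supply a proof rather than a citation, these three points are where the argument must actually be made.
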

\goodbreak
For the definition of multiplicities we refer
to~\cite[Chapter~3]{Maclagan_Sturmfels_tropibook}. A tropical variety is
\emph{locally linear} (see e.g. \cite{vig}) if a Euclidean neighborhood of each point
is isomorphic to a Euclidean open subset of the tropicalization of a linear subspace $\PP^n
\subset \PP^{m}$. For example, a hypersurface in $\PP^N$ is locally linear if and only if
the subdivision of its Newton polygon is a triangulation. It has
multiplicities one if and only if this triangulation is unimodular.

Note that $X$ is not assumed to intersect the torus of $\PP^N$. Therefore this
theorem applies for example to $X \subset (\PP^1)^3 \subset \PP^7$, see
Section~\ref{sec:delPezzo}, or more generally to $X$ in any projective toric variety with
fixed embedding.

One could wonder whether Theorem~\ref{ref:homologyagrees} enables one to
identify not only dimensions but homology classes. This is possible provided
that a certain spectral sequence degenerates at the $E_2$ page.
This $E_2$ page is equal to
$H^q(X, \mathcal{F}^p)$, where $\mathcal{F}^p = \Hom(\mathcal{F}_p, \RR)$. See the discussion after
Corollary~2\ in~\cite{IKMZ} or~\cite{Clemens}.

For a more detailed introduction to tropical homology, see
e.g.~\cite{Brief_intro_to_trops, IKMZ}. We will now give generalities about
tropical homology and compute some examples of interest. In particular, we will compute the dimensions of the tropical homology groups and show how Theorem \ref{ref:homologyagrees} holds. The last part of the paper is dedicated to showing a particular instance of this theorem for a special tropical K3 with involution and for its quotient.

\medskip

Recall that $\trop(\PP^n) = \mathbb{TP}^n$ is homeomorphic to an $n$-simplex,
see~\cite[Chapter~6.2]{Maclagan_Sturmfels_tropibook} and that it is covered by
$n+1$ copies of
\begin{equation}
    \mathbb{T}^n = \trop(\A^n) = (\{-\infty\} \cup \mathbb{R})^n,
\end{equation}
which are complements of torus invariant divisors.
Let $X$ be a tropical subvariety of $\mathbb{TP}^n$. The definitions of
sheaves $\mathcal{F}_p$
and groups $C_{p, q}$ computing the homology are all local, so we assume that
$X \subset \trop(\A^n)$ is contained in one of the distinguished open subsets.
We denote by
\begin{equation}
\mathbb{T}^J= \{ x\in \mathbb{T}^n \ | \ x_i=-\infty \text{ for all } i\not \in J\}
\end{equation}
for $J\subset \{1,...,n\}$ the tropicalization of smaller torus orbits.
Let now $X\in \mathbb{T}^n$ be a polyhedral complex. The {\it sedentarity}
$I(x)$ of a point $x\in X$ is the set of coordinates of $x$ which are equal to
$-\infty$, and we set $J(x):= \{1,...,n\} \setminus I(x)$. We denote by
\begin{equation}
\RR
^{J(x)} = \RR^n/\RR^{I(x)}
\end{equation}
the interior of $\mathbb{T}^{J(x)}$. For a face $E \subset X\cap \RR ^{J(x)}$ adjacent to $x$, we let $T_x(E)\subset T_x(\RR ^{J(x)})$ be the cone spanned by the tangent vectors to $E$ starting at $x$ and directed towards $E$. Set the following:
\begin{enumerate}
\item The {\bf tropical tangent space} $\mathcal{F}_1(x) \subset T_x(\RR ^{J(x)})$ is the vector space generated by all $T_x(E)$ for all $E$ adjacent faces to $x$;
\item The {\bf tropical multitangent space} $\mathcal{F}_p(x) \subset \bigwedge ^p T_x(\RR ^{J(x)}) $ is the vector space generated by all vectors of the form $v_1\wedge ... \wedge v_p$ for vectors $v_1,...,v_p\in T_x(E)$ for all $E$ adjacent faces to $x$ (this implies $\mathcal{F}_0(x)\cong \RR $)
\end{enumerate}

One can show that the multitangent vector space $\mathcal{F}_p(x)$ for $x\in X$ only depends on the minimal face $\Delta \subset X$ containing $x$. Hence we can write $\mathcal{F}_p(\Delta) := \mathcal{F}_p(x)$ for each $x\in \Delta$. We have the following group of $(p,q)$-chains
\begin{equation}
C_{p,q}(X) := \bigoplus _{\Delta \ q-\text{dim face of }X} \mathcal{F}_p(\Delta)
\end{equation}
giving rise to the chain complex
\begin{equation}
C_{p,\bullet } = \{ \cdots \longrightarrow  C_{p,q+1}(X) \overset{\partial}{\longrightarrow} C_{p,q}(X) \overset{\partial}{\longrightarrow} C_{p,q-1}(X) \longrightarrow \cdots \}
\end{equation}
where the differential $\partial$ is
the usual simplicial differential (we choose orientation for each face) composed with inclusion maps given by $\iota
: \mathcal{F}_p(\Delta) \to \mathcal{F}_p(\Delta')$ for $\Delta'
\prec \Delta$, see examples below.

Note that even when $\Delta'$ and $\Delta$ have different
sedentarities, we have $I(\Delta') \supset I(\Delta)$ so we get a natural
map $\RR^{J(x)}  = \RR^n/\RR^{I(x)} \twoheadrightarrow \RR^n/\RR^{I(x')} =
\RR^{J(x')}$ inducing the map
$\iota: \mathcal{F}_p(\Delta) \to \mathcal{F}_p(\Delta')$.

\begin{definition}
The $(p,q)${\it -th tropical homology group} $H_{p,q}(X)$ of $X$ is the $q$-th homology group of the complex $C_{p,\bullet }$.
\end{definition}

In the light of Theorem~\ref{ref:homologyagrees}, if $X = \trop(X')$ is a
tropicalization of suitable variety $X'$, then $\dim H_{p,q}(X)$ are the Hodge
numbers of $X'$.
For all $X$ the tropical Poincar\'e duality holds: $\dim H_{d-p, d-q}(X) =
\dim H_{p, q}(X)$, see \cite{ShawPoincare}.

\begin{example}[Line]
Let us compute the tropical homology of a tropical line $L$, as in
Figure~\ref{tropline}.
\begin{description}
\item[$\underline{\mathbf{p=0}}$:] From the discussion above, one
    immediately
    sees that $C_{0,0}(L)=\RR^4$ and $C_{0, 1} = \RR^3$ injects into
    $C_{0, 0}$, thus $\dim H_{0, 0}(X) = 1$ and $H_{0, 1}(X) = 0$.
\item[$\underline{\mathbf{p=1}}$:] The chain complex is $0\to C_{1,1}(X) \to C_{1,0}(X)\to 0$. Now, the same consideration as in the previous item yields $C_{1,0}(X)= \mathcal{F}_1(v_1)=\RR \langle e_1, e_2 \rangle$, where $e_1=(-1,0)$ and $e_2=(0,-1)$ are the standard basis vectors of $\RR ^2$ up to a sign. Moreover, one has that 
    \begin{equation}
C_{1,1}(X)= \mathcal{F}_1(p) \oplus \mathcal {F}_1(q) \oplus \mathcal{F}_1(r)
= \RR \langle e_1\rangle \oplus  \RR \langle e_2\rangle \oplus  \RR \langle
- e_1 - e_2 \rangle .
\end{equation}
The differential
\begin{equation}
\RR \langle e_1\rangle \oplus  \RR \langle e_2\rangle \oplus  \RR \langle
- e_1 - e_2 \rangle \overset{\partial}{\longrightarrow}\RR \langle e_1, e_2 \rangle
\end{equation}
is given by the natural inclusion $e_1 \mapsto e_1$, $e_2 \mapsto e_2$, and $-e_1-e_2 \mapsto -e_1-e_2$. Hence the kernel of the differential above is one dimensional, generated by
the sum $\langle e_1\rangle + \langle e_2 \rangle + \langle -e_1-e_2\rangle$.
So one has $\dim H_{1,0}(X)=0 $ and $\dim H_{1,1}(X) = 1$.
\end{description}
\end{example}

\begin{figure}\label{tropline}
  \centering
    \includegraphics[width=0.35\textwidth]{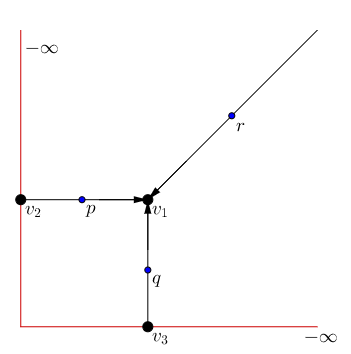}
      \caption{A tropical line}
\end{figure}

\begin{remark}\label{ref:tropicalzerosingular}
    By definition $\mathcal{F}_{0}(x) = \mathbb{R}$. Thus the complex
    $C_{0, \bullet}$ is in fact the singular homology complex for the
    subdivision of $X$ by polyhedra. Therefore the tropical homology group
    $H_{0, q}(X)$ is canonically identified with the singular homology group
    $H_{q}(X, \mathbb{R})$.
\end{remark}

\begin{example}[Elliptic curve]\label{sec:elliptic}
The next example example of tropical homology we compute is that of an elliptic curve in $\PP^1 \times \PP^1$. Its tropicalization is shown in Figure~\ref{tropell}.
\begin{figure}
  \centering
    \includegraphics[width=0.3\textwidth]{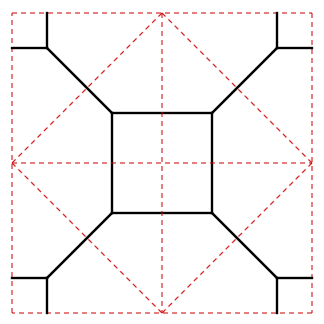}
      \caption{A tropical elliptic curve in $\PP ^1\times \PP^1$}
      \label{tropell}
\end{figure}
By the isomorphism $H_{0,q}(X)\cong H_q(X,\RR)$, it immediately follows that
$H_{0,0}(X)\cong \RR$ and $H_{0,1}(X)\cong \RR$.
We can compute $H_{1,1}(X)$ directly from the complex
\begin{equation}
C_{1,1}(X)\to C_{1,0}(X) .
\end{equation}
We get that $C_{1,1}(X)\cong \RR^E$ and $C_{1,0}(X)\cong \RR^{2V}$,
where $E = 16$ (respectively, $V = 8$) denotes the number of edges (respectively, of
interior vertices). The kernel of the map $C_{1,1}(X)\to C_{1,0}(X)$ is
$H_{1,1}(X)\cong \RR$ generated by the boundary of the square, hence $H_{1,1}(X)\cong \RR$.
\end{example}

\subsection{Del Pezzo in $(\PP^1)^3$}\label{sec:delPezzo}


Consider a surface $S$ in $\PP^1 \times \PP^1 \times \PP^1$ which is a section of
$\cO(1, 1, 1) := \cO(1) \boxtimes \cO(1) \boxtimes \cO(1)$; this is a del
Pezzo surface, its anticanonical divisor is by adjunction the restriction of $\cO(1, 1, 1)$ and
so the anticanonical degree is $6$. The equation
$F$ of $S$ can be written as
\begin{equation}\label{eq:delPezzoequation}
    F = \sum_{0\leq i,j, k\leq 1} a_{ijk} x^iy^jz^k,
\end{equation} where $x$, $y$, $z$ are
local coordinates on respective projective lines.
Suppose that we are over a valued field. Suppose further that
    $a_{ijk} = a_{1-i,1-j,1-k}$
for all indices and that $a_{1, 0, 0} > \max(a_{0, 1, 0},\, a_{0, 0, 1})$. Then the induced
subdivision of a cube is regular, as seen in Figure~\ref{tropicaldelPezzo}.

\begin{figure}
  \centering
  \begin{minipage}[b]{0.5\textwidth}
    \includegraphics[width=\textwidth]{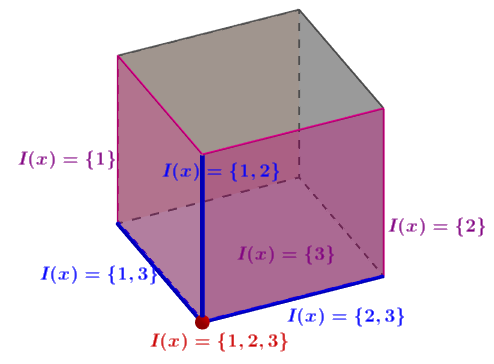}
      \caption{A tropicalization of $\PP ^1\times \PP^1\times
          \PP^1$ with the sedentarities of the faces at infinity}
      \label{tropcube}
  \end{minipage}
  \hfill
  \begin{minipage}[b]{0.4\textwidth}
    \includegraphics[width=0.9\textwidth]{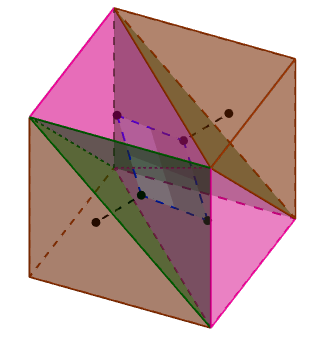}
    \caption{Regular subdivision of the cube and tropical del Pezzo}
    \label{tropicaldelPezzo}
  \end{minipage}
\end{figure}

Directly from the picture we see that there are $6$ points, $18$ edges and
$19$ faces in the non-sedentary part of $\trop(S)$. Consider now the sedentary
part. First recall that $\trop((\PP^1)^3)  \simeq (\mathbb{R}\cup \left\{
\pm\infty \right\})^3$ is homeomorphic to the cube, see Figure~\ref{tropcube}.
Its faces correspond to torus-invariant divisors in $(\PP^1)^3$.
The boundary $\trop(S) \setminus \mathbb{R}^3$ decomposes into $6$ components,
the intersections of $\trop(S)$ with those faces. We now make use of the following
\begin{theorem}[{\cite[Theorem~6.2.18]{Maclagan_Sturmfels_tropibook}}]\label{ref:closure:thm}
	Let $Y\subset T$ and let $\bar{Y}$ be the closure of $Y$ in a toric variety $X$. Then $\trop(\bar{Y})$ is the closure of $\trop(Y)$ in $\trop(X)$.
\end{theorem}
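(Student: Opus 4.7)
The plan is to exploit the canonical stratification of the extended tropicalization $\trop(X) = \bigsqcup_{\sigma \in \Sigma} N(\sigma)_\RR$, where $N(\sigma)_\RR := N_\RR/\mathrm{span}(\sigma)$ and $\sigma$ runs over cones of the fan $\Sigma$ defining $X$. This stratification is compatible with the torus-orbit decomposition $X = \bigsqcup_\sigma O_\sigma$, and working chart by chart (each affine chart $U_\sigma$ is a toric variety with dense orbit $O_\sigma$ a quotient torus) one verifies the identity $\trop(\bar Y) \cap N(\sigma)_\RR = \trop(\bar Y \cap O_\sigma)$, where the right-hand tropicalization is taken inside the quotient torus. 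The topology on $\trop(X)$ is the stratified one: a point $w \in N(\sigma)_\RR$ is approached from $N_\RR$ by sequences moving to infinity along the relative interior of $\sigma$.

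The easy inclusion $\overline{\trop(Y)} \subseteq \trop(\bar Y)$ is formal: $Y \subseteq \bar Y$ gives $\trop(Y) \subseteq \trop(\bar Y)$, while $\trop(\bar Y)$ is closed in $\trop(X)$ since each stratum $\trop(\bar Y \cap O_\sigma) \subseteq N(\sigma)_\RR$ is closed by the standard structure theorem for tropical varieties in a torus, and these closed strata glue correctly by the stratified topology. For the reverse inclusion, fix $w \in \trop(\bar Y) \cap N(\sigma)_\RR$ with $\sigma \neq \{0\}$ (otherwise $w \in \trop(Y)$ and there is nothing to prove); by the compatibility of the previous paragraph, there is a point $y \in \bar Y \cap O_\sigma$ with valuation $w$. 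Choose a lattice vector $u$ in the relative interior of $\sigma$. Then the sequence $w + tu \in N_\RR$ converges to $w$ in $\trop(X)$ as $t \to \infty$, so it suffices to exhibit $w + tu \in \trop(Y)$ for $t \gg 0$.

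To realize the tropicalized points $w+tu$ as valuations of genuine torus points of $Y$, one deforms $y$ along the one-parameter subgroup $\lambda_u : \mathbb{G}_m \to T$. Since $Y$ is Zariski-dense in $\bar Y$, after possibly enlarging the valued base field one produces an arc $\Spec R \to \bar Y$ from a discrete valuation ring whose special fiber is $y$ and whose generic fiber lands in $Y$; tropicalizing this arc and parametrizing by $t$ yields points $w + tu + o(1) \in \trop(Y)$ as required. The main obstacle is lifting the set-theoretic statement ``$y$ lies in $\overline{Y}$'' to such an arc with the prescribed tangential direction $u$. The standard technical tool is the fundamental theorem of tropical geometry, which characterizes $\trop(\bar Y)$ via vanishing of initial ideals: $w$ lies in $\trop(\bar Y)$ iff $\mathrm{in}_w(I(\bar Y))$ contains no monomial, and for boundary $w$ this translates, via flatness of the Gr\"obner degeneration along $u$, into the existence of the desired arc and hence of the required deformation of $y$ inside $Y$.
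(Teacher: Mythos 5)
The paper offers no proof of this statement: it is quoted verbatim as Theorem~6.2.18 of Maclagan--Sturmfels and used as a black box, so there is nothing internal to compare your argument against. Judged on its own, your outline does follow the strategy of the standard proof --- stratify $\trop(X)$ by the cones $\sigma$ of the fan, identify $\trop(\bar Y)\cap N(\sigma)_\RR$ with $\trop(\bar Y\cap O_\sigma)$, and relate a boundary point $w$ to interior points $w+tu$ with $u\in\mathrm{relint}\,\sigma$ via initial ideals --- but there is a genuine gap in the inclusion you call formal. You deduce $\overline{\trop(Y)}\subseteq\trop(\bar Y)$ from the claim that $\trop(\bar Y)$ is closed in $\trop(X)$, and you justify closedness by saying that each piece $\trop(\bar Y\cap O_\sigma)$ is closed in its stratum $N(\sigma)_\RR$ and that the pieces ``glue correctly.'' That implication is false: $N_\RR$ is closed in the open stratum of $\trop(\PP^1)=\RR\cup\{\pm\infty\}$ but is not closed in $\trop(\PP^1)$. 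Closedness of the extended tropicalization is not a point-set formality; it says precisely that every limit at infinity of points of $\trop(Y)$ is accounted for by some $\trop(\bar Y\cap O_\sigma)$, which is one half of the theorem itself. So your ``easy'' direction secretly contains the same content as the hard one and is not established.

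The hard direction is also only gestured at. The actual content one must verify is the identity relating initial ideals across strata: for $u\in\mathrm{relint}\,\sigma$ and a lift $\tilde w\in N_\RR$ of $w\in N(\sigma)_\RR$, one shows that for $t\gg 0$ the initial ideal $\mathrm{in}_{\tilde w+tu}(I(Y))$ stabilizes and its image in $\kk[\sigma^\perp\cap M]$ equals $\mathrm{in}_{w}(I(\bar Y\cap O_\sigma))$; the Fundamental Theorem then converts this simultaneously into both inclusions. Your alternative via an arc $\Spec R\to\bar Y$ with special fibre $y$ and generic fibre in $Y$ ``with the prescribed tangential direction $u$'' is plausible, but the existence of such an arc with controlled tropical direction is exactly what needs proof (it is essentially Tevelev's lifting lemma), not a standard fact one may invoke. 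To repair the write-up, prove the initial-ideal identity above; it yields the stratumwise equality $\trop(\bar Y)\cap N(\sigma)_\RR=\pi_\sigma\bigl(\{p\in\trop(Y): p+\sigma\subseteq\trop(Y)\}\bigr)$, from which both inclusions, and in particular the closedness you assumed, follow at once.
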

Applying Theorem~\ref{ref:closure:thm} to $Y = \bar{S}$ we derive that the boundary of the tropicalization is the tropicalization of the boundary, so we have
\begin{equation}
    \trop(S) \cap \trop(D) = \trop(S\cap D)
\end{equation}
for each torus-invariant divisor. Such $D$ is one of the divisors defined by
$x^{\pm 1}$, $y^{\pm 1}$, $z^{\pm 1}$. Without loss of generality, assume $D =
(x = 0)$. By restricting the element $F$ of~\eqref{eq:delPezzoequation} to $D$
we get $\sum_{0\leq j, k\leq 1} a_{0jk} y^jz^k$, which cut out a quadric,
whose tropicalization is given in Figure~\ref{tropQuad}. In particular, it has
five edges, two
mobile points and four sedentary points.

\begin{figure}
  \centering
    \includegraphics[width=0.3\textwidth]{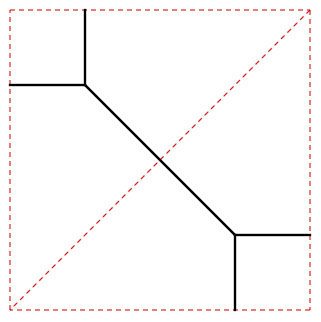}
      \caption{A tropical quadric in $\PP ^1\times \PP^1$}
      \label{tropQuad}
\end{figure}

\goodbreak
In total we get the following strata.
\begin{center}
\begin{tabular}{@{}ll ll ll ll ll@{}}
    Tropical del Pezzo&\phantom{mm}& $|sedentarity|$ &\hspace{1em}& $0$  &\hspace{1em}& $1$  &\hspace{1em}& $2$\\\midrule
    && points        && $6$ && $12$ && $12$\\
    && edges         && $18$&& $30$ && $-$\\
    && faces         && $19$ && $-$  && $-$
\end{tabular}
\end{center}
This information enables us to immediately compute the $C_{p, q}$ even without
analysing maps. This is because our del Pezzo is locally linear: near
each vertex the tropical structure looks like the tropicalization of $\mathbb{P}^2 \subset
\mathbb{P}^3$, as shown in Figure~\ref{K3}.
The complexes are
\begin{alignat}{3}
    &C_{0, 2} = \mathbb{R}^{19}          &&\to C_{0, 1} = \mathbb{R}^{18} \oplus
    \mathbb{R}^{30} &&\to C_{0, 0} = \mathbb{R}^{30}\\
    &C_{1, 2} = \mathbb{R}^{2\cdot 19}   &&\to C_{1, 1} = \mathbb{R}^{3\cdot
    18} \oplus \mathbb{R}^{30} &&\to C_{1, 0} = \mathbb{R}^{3\cdot 6} \oplus
    \mathbb{R}^{2\cdot 12}\\
    &C_{2, 2} = \mathbb{R}^{19}          &&\to C_{2, 1} =
    \mathbb{R}^{2\cdot 18} &&\to
    C_{2, 0} = \mathbb{R}^{3\cdot 6}.
    \label{eq:tropihomologydelPezzo}
\end{alignat}
By comparing $H_{0, \bullet}$ with singular homology and then using Poincar\'e
duality we get immediately that
\begin{equation}
    H_{0, 0}  \simeq  H_{2, 2}  \simeq \RR\quad H_{0, 1} = H_{0, 2} =
    H_{2, 0} = H_{2, 1} = 0.
\end{equation}
So the interesting part is the homology of $C_{1, \bullet}$.
It is not impossible to compute it by hand, however it would take a lot of
space to explain it properly, so we merely present a series of reductions, by removing strata
corresponding to higher sedentarity first. Each of these reductions corresponds
to finding an exact subcomplex $D \subset C_{1, \bullet}$ and reducing to
computing homology of $C_{1, \bullet}/D$.

Consider a sedentary point $p$ on the face of a cube. This point has two edges
$e_1$, $e_2$ going towards the boundary of this face (and a third edge, which
is irrelevant here). In $C_{1, \bullet}$ these polyhedra give a
\emph{subcomplex} $\RR[e_1]\oplus \RR[e_2]\to \RR^2[p]$, which is exact.
Thus the homology of $C_{1, \bullet}$ is the homology of
the quotient $C'$ by all these subcomplexes for $12$ choices of $p$. The
quotient is
\begin{equation}
    \RR^{2\cdot 19} \to \RR^{3\cdot 18} \oplus \RR^{6} \to \RR^{3\cdot 6}.
\end{equation}
Next, consider the picture on Figure~\ref{tropicaldelPezzo}, consider one of
the two corner vertices and all its adjacent faces ($3$ edges, $3$ faces, $1$
simplex). In the tropical variety those correspond to $1$ point $p$, $3$ edges
$e_i$ and $3$ faces $f_i$ and glue together to form on tropical
$\mathbb{A}^2$. Such an $\mathbb{A}^2$ has no higher homology and
correspondingly, the sequence
\begin{equation}
    \bigoplus \RR^2[f_i] \to \bigoplus \RR^3[e_i]\to \bigoplus \RR^3[p]
\end{equation}
is exact. Moreover it is a subcomplex of $C'$. Dividing $C'$ by two
subcomplexes given by two corner vertices, we get $C''$ equal to
\begin{equation}
    \RR^{2\cdot 13} \to \RR^{3\cdot 12} \oplus \RR^{6} \to \RR^{3\cdot 4}.
\end{equation}
In this new sequence the $\RR^{3\cdot 4}$ corresponds to $4$ multitangent
spaces at four vertices of the square in the interior, see
Figure~\ref{tropicaldelPezzo}. None of the edges adjacent to them was modified
in the process thus it is clear that the right map is
surjective. Hence $H_{1, 0} = 0$. By Poincar\'e duality we get $H_{1, 2} = 0$
and thus
\begin{equation}
\dim H_{1, 1} = 36 + 6 - 26 - 12 = 4,
\end{equation}
as expected from the Hodge diamond of a del Pezzo of anticanonical degree $6$.

\subsection{A K3 surface in $(\PP^1)^3$}\label{sec:K3}

Let $S \subset \mathbb{P}^1 \times \mathbb{P}^1 \times \mathbb{P}^1$ be a K3
surface over a valued field $\kk$ as in Example~\ref{ref:kristinsK3:exam}.
This section discusses its tropical homology and relations to its
Hodge classes; shortly speaking using tropical homology we recover the
expected Hodge numbers and an anti-symplectic involution.

Before we begin computations on the homology, let us discuss how many points,
edges and faces the polyhedral decomposition of the tropicalization has. This
polyhedral decomposition is dual to the subdivision induced on the $2\times
2\times 2$ cube by the coefficients of $S$, as explained
in~\cite[Definition~2.3.8, Figure~1.3.3]{Maclagan_Sturmfels_tropibook}.
See Figure~\ref{K3}, where the bounded part of $\trop(S)$
is presented.

Let us restrict to the torus and consider polyhedra with empty
sedentarity. First and foremost, $\trop(S)$ comes from a regular
subdivision into $48$ simplices, so it has $48$ distinguished points.
Consider now faces of the subdivision (edges in the tropicalization).
Each face may be either ``inner'', shared by two tetrahedra or ``outer''
adjacent to only one of them. There are $48$ outer faces and each tetrahedron
has four faces, thus in total there are $\frac{48\cdot 4 + 48}{2} = 120$ faces
in the subdivision. As seen in the del Pezzo case, there are $19$ edges in a
subdivision of a unit cube. In the $2\times 2\times 2$ cube we have $8\cdot 19$
of those segments; $36$ of them are adjacent to exactly two cubes, $6$ of
them are adjacent to four cubes and the others stick to one cube. Thus there
are $8\cdot 19 - 36 - 3\cdot 6 = 98$ segments.

Recall that $\trop((\PP^1)^3) = \bar{\RR}^3$, where $\bar{\RR} = \RR \cup \{\pm \infty\}$,
this is homeomorphic to a cube, see~Figure~\ref{tropcube}.
The boundary of $\trop(S)$ is the intersection of $\trop(S)$ with the
boundary of this cube. Pick a face $\mathcal{F}$ of the cube. It is the
tropicalization of one of the six toric divisors $x_i^{\pm 1}$ for $i=1, 2, 3$, say to $x_1$.
Again utilizing~Theorem~\ref{ref:closure:thm}, we have
\begin{equation}
    \trop(S) \cap \mathcal{F} = \trop(S\cap (x_1=0)).
\end{equation}
But $S\cap (x_1 = 0)$ is an elliptic curve in $\mathbb{P}^1\times
\mathbb{P}^1$ and by Section~\ref{sec:elliptic} we know that its
tropicalization has
 $16$ edges, $8$ mobile points and $8$ sedentary points, see
 Figure~\ref{tropell}.
In total we have the following strata.

\begin{center}
\begin{tabular}{@{}ll ll ll ll ll@{}}
    Tropical K3&\phantom{mm}& $|sedentarity|$ &\hspace{1em}& $0$  &\hspace{1em}& $1$  &\hspace{1em}& $2$\\\midrule
    && points        && $48$ && $48$ && $24$\\
    && edges         && $120$&& $96$ && $-$\\
    && faces         && $98$ && $-$  && $-$
\end{tabular}
\end{center}

This information enables us to immediately compute the $C_{p, q}$ even without
analysing maps. This is because $S$ is locally linear: near
each vertex the tropical structure looks like the tropicalization of $\mathbb{P}^2 \subset
\mathbb{P}^3$ see Figure~\ref{tropplane} and compare in Figure~\ref{K3}.

The complexes are
\begin{alignat}{3}
    &C_{0, 2} = \mathbb{R}^{98}          &&\to C_{0, 1} = \mathbb{R}^{120} \oplus
    \mathbb{R}^{96} &&\to C_{0, 0} = \mathbb{R}^{120}\\
    &C_{1, 2} = \mathbb{R}^{2\cdot 98}   &&\to C_{1, 1} = \mathbb{R}^{3\cdot
    120} \oplus \mathbb{R}^{96} &&\to C_{1, 0} = \mathbb{R}^{3\cdot 48} \oplus
    \mathbb{R}^{2\cdot 48}\\
    &C_{2, 2} = \mathbb{R}^{98}          &&\to C_{2, 1} =
    \mathbb{R}^{2\cdot 120} &&\to
    C_{2, 0} = \mathbb{R}^{3\cdot 48}.
    \label{eq:tropihomologyK3}
\end{alignat}
From this fact alone we see that $\chi(C_{1,\bullet}) = 2\cdot 98 - 3\cdot 120 -
  96 + 3\cdot 48 + 2\cdot = -20$ in concordance with the expected result.
  Moreover one can show that $H_{1, 0} = 0$, roughly because the
  classes of sedentary edges surject to classes of sedentary points and
  other points can be analyzed directly by Figure~\ref{K3}.
  By Poincar\'e duality, $H_{1, 2} = 0$.
  We have now
  \begin{equation}\label{eq:H11forK3}
      -20 = \chi(C_{1, \bullet}) = \dim H_{1,0} - \dim H_{1, 1} + \dim H_{1, 2} =
      -\dim H_{1, 1},
  \end{equation}
  thus $\dim H_{1, 1} = 20$.

    \medskip

   We will now consider $(0, q)$-classes.
  The homology of $C_{0, \bullet}$ is just the singular
  homology of the
  tropical variety by Remark~\ref{ref:tropicalzerosingular}. The tropical variety is
  contractible to the boundary of the cube.
  Thus $C_{0, \bullet}$ is exact in the middle and its homology groups are the
  homology groups of the sphere:
  \begin{equation}
      H_{0, 0}  \simeq \mathbb{R},\quad H_{0, 1} = 0,\quad H_{0, 2} =
      \mathbb{R}.
  \end{equation}
  
  We have just given an explicit proof of our special case of Theorem~\ref{ref:homologyagrees}.
  \begin{proposition}\label{ref:hodgeagreeK3:prop}
      The tropical Hodge numbers of $\trop(S)$ agree with the Hodge numbers of
      $S$.\qed
  \end{proposition}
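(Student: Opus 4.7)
The plan is to assemble the tropical homology computations that immediately precede the statement into a comparison with the standard K3 Hodge diamond, in which $h^{0,0}=h^{2,2}=h^{2,0}=h^{0,2}=1$, $h^{1,1}=20$, and all the remaining entries vanish. By the construction of $S$ in Example~\ref{ref:kristinsK3:exam}, $\trop(S)$ is locally linear with all multiplicities equal to one, so the resulting $C_{p,\bullet}$ have the dimensions tabulated in~\eqref{eq:tropihomologyK3}; I will work column by column in $p$.

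For $p=0$, Remark~\ref{ref:tropicalzerosingular} identifies $H_{0,q}(\trop(S))$ with the singular homology $H_q(\trop(S),\mathbb{R})$. The tropical variety deformation retracts onto the boundary of the cube $\trop((\PP^1)^3)$, which is a $2$-sphere, giving $H_{0,0}\simeq\mathbb{R}$, $H_{0,1}=0$, $H_{0,2}\simeq\mathbb{R}$, matching $h^{0,0}=1$, $h^{0,1}=0$, $h^{0,2}=1$. For $p=2$, tropical Poincar\'e duality~\cite{ShawPoincare} gives $\dim H_{2,2-q}=\dim H_{0,q}$, which reproduces $h^{2,2}=1$, $h^{2,1}=0$, $h^{2,0}=1$.

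For $p=1$, the Euler characteristic of $C_{1,\bullet}$ is $2\cdot 98-(3\cdot 120+96)+(3\cdot 48+2\cdot 48)=-20$ from~\eqref{eq:tropihomologyK3}, matching $h^{1,0}-h^{1,1}+h^{1,2}=-20$. To separate the three individual dimensions I need $H_{1,0}=H_{1,2}=0$. For $H_{1,0}$ I will adapt the argument used for the del Pezzo in Section~\ref{sec:delPezzo}: removing the exact subcomplexes generated by pairs of sedentary edges meeting at a sedentary point reduces $C_{1,\bullet}$ to a quotient in which, corner-vertex by corner-vertex, the remaining $(p,q)=(1,q)$ pieces look like tropical $\mathbb{A}^2$ and hence are exact; the resulting quotient has a surjective right-hand map onto the $(1,0)$-chains at the interior vertices, giving $H_{1,0}=0$. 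Then $H_{1,2}=0$ by Poincar\'e duality, and the Euler characteristic forces $\dim H_{1,1}=20=h^{1,1}$.

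The main obstacle is the vanishing of $H_{1,0}$, which is not merely formal: it requires a careful enumeration of the sedentary strata of $\trop(S)$ and exhibiting enough exact subcomplexes to reduce to a situation where surjectivity of the boundary is visible. The argument is parallel to the del Pezzo case but larger combinatorially because the Newton polytope is the $2\times 2\times 2$ cube rather than the unit cube; the upshot is that each of the six boundary faces restricts to a tropical elliptic curve and the corner analysis near the $(1,q)$ chains at vertices of sedentarity $\geq 1$ produces the required subcomplexes. Everything else in the proof is bookkeeping against the computed dimensions.
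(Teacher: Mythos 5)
Your proposal is correct and follows essentially the same route as the paper: identify $H_{0,\bullet}$ with singular homology of the sphere, transfer to $p=2$ by Poincar\'e duality, and pin down $\dim H_{1,1}=20$ from $\chi(C_{1,\bullet})=-20$ once $H_{1,0}=H_{1,2}=0$ are established. The only difference is that you spell out the vanishing of $H_{1,0}$ by explicitly adapting the del Pezzo reduction via exact subcomplexes, whereas the paper only sketches this step in one sentence; your version is, if anything, more complete on that point.
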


  We expose an explicit generator of $H_{0, 2}$ and analyze the action of
  $\sigma$ on this space. Briefly speaking, this class is obtained as the
  boundary of the interior of the cube.

  To expand this, consider the boundary of the cube and the complex
  $C'_{2} \to C'_{1} \to C'_0$ computing its singular homology. This
  boundary can be embedded into a full cube and the complex $C'$ becomes part
  of the complex $C''$ computing the homology of the cube
  \begin{equation}
      0\to C''_3 \to C''_2 \to C''_1 \to C''_0
  \end{equation}
  Since the cube is contractible, the complex $C''$ is exact. Hence the
  unique class $\omega$ in $H^2(C')$ is the boundary of the class $\Omega$ in $C''_3$.
  Consider now the action of $\sigma$ on the $\RR^3$ containing the tropical
  variety. We have $\sigma(\mathbf{x}) = -\mathbf{x}$ in $\RR^3$, thus $\sigma$ changes
  orientation, hence $\sigma(\Omega) = -\Omega$, thus it is anti-symplectic:
  \begin{equation}\label{eq:omegadoesnotdescend}
      \sigma(\omega) = \sigma(\partial \Omega) = -\omega.
  \end{equation}
  This is expected, since otherwise $\omega$ would descend to a class in the
  tropical homology of the tropicalized Enriques surface and give a non-zero $(0, 2)$
  class on this surface.

    \medskip

  Finally we investigate $\sigma$-invariant part $C_{p, \bullet}^{\sigma}$ of
  the complexes $C_{p, \bullet}$. Since we work over characteristic different
  from two, the functor $(-)^{\sigma}$ is exact and so the homology of
  $C_{p, \bullet}^{\sigma}$ is the invariant part of the homology of
  $C_{p, \bullet}$.
  Moreover, by $\trop(S)/\sigma$ is a tropical manifold and
  $C_{\bullet, \bullet}^{\sigma}$ computes its tropical homology,
  see~\cite[Chapter~7]{Brief_intro_to_trops}.
  In particular the homology groups $H_{p, q}^{\sigma} = H^{q}(C_{p,
  \bullet}^{\sigma})$ satisfy
  $H^{\sigma}_{p, q} = H^{\sigma}_{2-p, 2-q}$.
  We believe, though
  have not proved this formally here, that the manifold $\trop(S)/\sigma$ is
  a tropicalization of the Enriques surface $S/\sigma$.
  Taking this belief for granted, the homology classes of $C_{\bullet,
  \bullet}^{\sigma}$ compute the tropical homology of Enriques surface
  $S/\sigma$.

  Let us proceed to the computation.
  First, it is straightforward to compute the dimensions of $C_{p,
  q}^{\sigma}$, since
  $\trop(S)$ does not contain the origin. Hence every face $F$ of $\trop(S)$ gets
  mapped by $\sigma$ to a unique other face $F'$ so that the action of $\sigma$
  on the space spanned by $[F]$ and $[F']$ always decomposes into invariant
  subspace $[F] + [F']$ and anti-invariant space $[F] - [F']$. Therefore $\dim
  C_{p, q}^{\sigma} = \frac{1}{2}\cdot \dim C_{p, q}$ for all $p, q$ and the
  sequences are
    \begin{alignat}{3}
        &C_{0, 2}^{\sigma} = \mathbb{R}^{49}          &&\to C_{0, 1}
        ^{\sigma}= \mathbb{R}^{60} \oplus
        \mathbb{R}^{48} &&\to C_{0, 0} ^{\sigma}= \mathbb{R}^{60}\\
        &C_{1, 2} ^{\sigma}= \mathbb{R}^{2\cdot 49}   &&\to C_{1, 1} ^{\sigma}= \mathbb{R}^{3\cdot
        60} \oplus \mathbb{R}^{48} &&\to C_{1, 0} ^{\sigma}=
        \mathbb{R}^{3\cdot 24} \oplus
        \mathbb{R}^{2\cdot 24}\\
        &C_{2, 2} ^{\sigma}= \mathbb{R}^{49}          &&\to C_{2, 1} ^{\sigma}=
        \mathbb{R}^{2\cdot 60} &&\to
        C_{2, 0} ^{\sigma}= \mathbb{R}^{3\cdot 24}.
        \label{eq:tropihomologyEnriques}
    \end{alignat}

  The key result is already computed in~\eqref{eq:omegadoesnotdescend} when considering $(0, q)$-classes: the generator
  $\omega$ of $H_{0, 2}$ does not lie in $H_{0, 2}^{\sigma}$. Thus
  $H_{0, 2}^{\sigma} = H_{0, 1}^{\sigma} = 0$ and $H_{0, 0}^{\sigma}  \simeq  \RR$.
  By symmetry $H_{2, 0}^{\sigma}  = H_{2, 1}^{\sigma} = 0$ and $H_{2,
  2}^{\sigma}
  \simeq \RR$. Finally
  \begin{equation}
      -\dim H_{1, 1} = \dim H_{1, 0} - \dim H_{1, 1} + \dim H_{1, 2} = - \chi\left(C_{1, \bullet}^{\sigma}\right) =
      -\frac{1}{2}\chi(C_{1, \bullet}) = -10,
  \end{equation}
  as in Equation~\eqref{eq:H11forK3}. We obtain the following counterpart of
  Proposition~\ref{ref:hodgeagreeK3:prop}.
  \begin{proposition}\label{ref:hodgeagreeEnriques:prop}
      The dimensions of $\sigma$-invariant parts of tropical homology groups of $S$ agree with the Hodge numbers of $S/\sigma$.\qed
  \end{proposition}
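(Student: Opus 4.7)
The plan is to compute each $H_{p,q}^\sigma$ individually and compare the resulting list against the Enriques Hodge diamond. The starting observation is that since we work in characteristic zero, the functor of $\sigma$-invariants is exact, so $H^\sigma_{p,q}$ coincides with the $\sigma$-invariant subspace of $H_{p,q}(\trop(S))$. The K3 Hodge numbers computed in Proposition~\ref{ref:hodgeagreeK3:prop} then force $H^\sigma_{0,1} = H^\sigma_{1,0} = H^\sigma_{1,2} = H^\sigma_{2,1} = 0$ for free.

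To handle the remaining bidegrees I would first set up the dimension count. Because $\trop(\sigma)$ is the antipodal map on $\RR^3$ and $(0,0,0)\notin \trop(S)$, every face of $\trop(S)$ pairs with a distinct image, yielding $\dim C_{p,q}^\sigma = \frac{1}{2}\dim C_{p,q}$ as displayed in the sequences preceding the proposition. In particular
\begin{equation}
    \chi\left(C_{1,\bullet}^\sigma\right) = \tfrac{1}{2}\chi\left(C_{1,\bullet}\right) = -10,
\end{equation}
which will pin down $\dim H^\sigma_{1,1}$ once the flanking groups are shown to vanish.

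For the degrees $(0,0)$ and $(2,2)$, the class of any point is $\sigma$-invariant, so $H^\sigma_{0,0}\simeq \RR$; Poincaré duality on the tropical manifold $\trop(S)/\sigma$ — valid by applying \cite{ShawPoincare} to $C^\sigma_{\bullet,\bullet}$, which computes the tropical homology of $\trop(S)/\sigma$ by \cite{Brief_intro_to_trops} — then gives $H^\sigma_{2,2}\simeq \RR$. The crucial non-formal step concerns $(0,2)$: here I would reuse the explicit description of the generator $\omega$ of $H_{0,2}(\trop(S))$ as $\partial\Omega$, where $\Omega$ is the fundamental chain of the cube interior. Since $\sigma$ is the antipodal map on $\RR^3$, it reverses orientation, so $\sigma(\Omega)=-\Omega$ and consequently $\sigma(\omega)=-\omega$, as in~\eqref{eq:omegadoesnotdescend}. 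Therefore $H^\sigma_{0,2} = 0$, and dually $H^\sigma_{2,0} = 0$.

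Combining $H^\sigma_{1,0} = H^\sigma_{1,2} = 0$ with the Euler characteristic $-10$ yields $\dim H^\sigma_{1,1}=10$, matching $h^{1,1}(S/\sigma)$. The main obstacle I foresee is precisely the sign computation $\sigma(\omega)=-\omega$: this is the single geometric input that distinguishes the Enriques from the K3 Hodge diamond, and everything else in the argument reduces to exactness of invariants in characteristic zero together with Poincaré duality. A secondary point to verify carefully is that the quotient $\trop(S)/\sigma$ really is a tropical manifold, so that Poincaré duality applies to $C^\sigma_{\bullet,\bullet}$; this follows from the freeness of the $\sigma$-action established in the dimension count above.
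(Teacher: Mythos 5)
Your proposal is correct and follows essentially the same route as the paper: exactness of $(-)^{\sigma}$ in characteristic zero, the halving $\dim C_{p,q}^{\sigma}=\tfrac12\dim C_{p,q}$ coming from the freeness of the antipodal action (the origin not lying on $\trop(S)$), the orientation-reversal computation $\sigma(\omega)=-\omega$ killing $H^{\sigma}_{0,2}$ and, dually, $H^{\sigma}_{2,0}$, and the Euler characteristic $\chi(C_{1,\bullet}^{\sigma})=-10$ pinning down $\dim H^{\sigma}_{1,1}=10$. The only cosmetic difference is that you derive the vanishing of $H^{\sigma}_{0,1}$, $H^{\sigma}_{1,0}$, $H^{\sigma}_{1,2}$, $H^{\sigma}_{2,1}$ directly from the vanishing of the corresponding K3 groups, which the paper does implicitly.
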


\section{Topology of analytifications of Enriques
surfaces}\label{sec:analytic}

    In this section we analyze the analytification of an Enriques surface
    which is the quotient of the K3 surface from
    Example~\ref{ref:kristinsK3:exam}.
    Fix a valued field $\kk$ and a K3 surface $S \subset (\PP^1)^3$ over $\kk$
    together with an involution $\sigma:S\to S$, as in
    Example~\ref{ref:kristinsK3:exam}.
    We first analyze the topology of $S^{an}$ itself.

    \begin{proposition}\label{ref:homotopyofSan:prop}
        The topological space $S^{an}$ has a strong deformation
        retraction onto a two-dimensional sphere $C$.
        More precisely, there exist continuous maps $s:C\to S^{an}$ and $e: S^{an}
        \to C$, so that $es = \operatorname{id}_C$ and $se$ is homotopic to
        $\operatorname{id}_{S^{an}}$. The maps $s$ and $e$ may be chosen to be
        $\sigma$-equivariant, where $\sigma$ acts on $C$ antipodally.
    \end{proposition}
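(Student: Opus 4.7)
The plan is to combine (i) a homotopy equivalence between $S^{an}$ and its tropicalization coming from the skeleton theory for sch\"on embeddings with (ii) a homotopy equivalence between $\trop(S)$ and a two-sphere dictated by the unimodular triangulation, and then to transport the involution through both steps. Every map in sight will come from canonical constructions compatible with $\sigma$: recall that $\sigma$ is a monomial toric involution and that its tropicalization is $v\mapsto -v$, which by the compatibility square in Example~\ref{ref:kristinsK3:exam} commutes with the map $\pi$ of Equation~\eqref{eq:fromanal}.

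For step (i) I would invoke the following skeleton statement: because $S$ is a smooth hypersurface in $(\PP^1)^3$ whose tropicalization is sch\"on with all multiplicities equal to $1$ (thanks to the unimodular triangulation of Example~\ref{ref:kristinsK3:exam}), the tropicalization $\pi:S^{an}\to \trop(S)$ admits a continuous section $j:\trop(S)\to S^{an}$ with $\pi\circ j=\operatorname{id}_{\trop(S)}$ and $j\circ \pi$ homotopic to $\operatorname{id}_{S^{an}}$. This is a special case of the skeleton theory for non-degenerate formal models developed by Berkovich and refined tropically; see~\cite{Gubler_Rabinoff_Werner}.

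For step (ii) I would show that $\trop(S)$ is homotopy equivalent to a two-sphere $C$. Its singular homology was computed in Section~\ref{sec:K3} and coincides with that of $S^2$. Near every vertex $\trop(S)$ looks like the tropicalization of $\PP^2\subset \PP^3$ pictured in Figure~\ref{tropplane}, which is simply connected, and a direct inspection of the resulting $1$-skeleton shows $\pi_1(\trop(S))=0$; then Whitehead's theorem gives the desired homotopy equivalence. Concretely, one may take $C$ to be the boundary of a small round ball around the origin in $\RR^3$, and take $e':\trop(S)\to C$ to be the radial projection $v\mapsto v/\|v\|$, well-defined because $0\notin \trop(S)$ by Example~\ref{ref:kristinsK3:exam}; a homotopy inverse $s':C\to \trop(S)$ can be chosen with $e'\circ s'=\operatorname{id}_C$.

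Composing, I would set $s:=j\circ s':C\to S^{an}$ and $e:=e'\circ \pi:S^{an}\to C$, so that $e\circ s = e'\circ \pi\circ j\circ s' = e'\circ s' = \operatorname{id}_C$ and $s\circ e = j\circ s'\circ e'\circ \pi \sim j\circ \pi \sim \operatorname{id}_{S^{an}}$. The antipode on $\RR^3$ commutes with the radial projection $e'$ and acts antipodally on $C$, and $s'$ can be chosen $\sigma$-equivariantly by replacing it with the average of its two conjugates. The main obstacle in this plan is the $\sigma$-equivariance of the skeleton section $j$: it is not automatic, but it should follow from the inverse-limit description $S^{an} = \varprojlim \trop(S\subset \A^n)$ of~\cite{Payne_limit} applied to a $\sigma$-equivariant cofinal system of embeddings (for instance, replacing each embedding $\iota:S\hookrightarrow \A^n$ by $(\iota,\sigma\circ \iota):S\hookrightarrow \A^{2n}$).
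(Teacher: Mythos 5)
Your overall architecture is the same as the paper's: retract $S^{an}$ onto $\trop(S)$ using the sch\"on/multiplicity-one skeleton theory of \cite{Gubler_Rabinoff_Werner, Gubler_Rabinoff_Werner_second}, then retract $\trop(S)$ onto a sphere surrounding the origin, and compose; your step (i) matches the paper's essentially verbatim. The differences, and the problems, are concentrated in step (ii) and in the equivariance bookkeeping. For step (ii) the paper does not invoke Whitehead's theorem at all: it observes that $\trop(S)$ contains an actual polyhedral cube $C$ centered at the origin (the bounded part visible in Figure~\ref{K3}), fixed setwise by the antipodal involution, and that the polyhedral structure provides a $\sigma$-equivariant strong deformation retraction of $\trop(S)$ onto $C\cong S^2$. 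Your route could be made to work, but as written it has genuine gaps: (a) ``a direct inspection of the resulting $1$-skeleton shows $\pi_1(\trop(S))=0$'' is not a valid argument, since the fundamental group of a $2$-complex is not determined by its $1$-skeleton (whose $\pi_1$ is a large free group) --- you need the $2$-cells, e.g.\ via local linearity and van Kampen, or simply the explicit retraction onto $C$, at which point Whitehead is superfluous; (b) Whitehead requires isomorphisms on \emph{integral} homology, whereas Section~\ref{sec:K3} computes homology with $\RR$-coefficients; (c) the literal formula $v\mapsto v/\|v\|$ does not extend continuously to the sedentarity-two points of $\trop(S)\subset(\RR\cup\{\pm\infty\})^3$, so the radial projection must be performed in compactified (cube) coordinates; and (d) ``averaging the two conjugates of $s'$'' is meaningless in a target that is not a vector space --- fortunately it is also unnecessary, since the natural $s'$ (inverse of the radial projection restricted to the embedded cube) is already equivariant.

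On the equivariance of the skeleton section $j$, which you correctly single out as the delicate point: your proposed fix via the inverse-limit description of \cite{Payne_limit} does not obviously produce a section of the single map $\pi$ of Equation~\eqref{eq:fromanal} for the given embedding, so it does not resolve the issue. The paper's resolution is more direct: the retraction $S^{an}\to\trop(S)$ \emph{is} the map $\pi$, which is $\sigma$-equivariant because $\sigma$ is the restriction of a monomial automorphism of $(\PP^1)^3$ (Example~\ref{ref:kristinsK3:exam}); since the skeleton is canonically attached to the embedded surface and $\pi$ restricts to a homeomorphism from it onto $\trop(S)$ (multiplicity one everywhere), $\sigma^{an}$ preserves the skeleton and the section, being the inverse of that homeomorphism, is automatically equivariant. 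You should replace the Payne-limit workaround with this canonicity argument.
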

    \begin{proof}
        In Section~\ref{sec:K3} we consider the tropicalization $\trop(S)
        \subset (\RR\cup \{\pm \infty\})^3$ with the antipodal involution
        $\trop(\sigma)$. We shorten $\trop(\sigma)$ to $\sigma$.
        There is a cube ${C \subset \trop(S)}$ fixed under the involution,
        see~Figure~\ref{K3}.
        This cube is a strong deformation retract of $\trop(S)$ and the
        retraction can be chosen to be $\sigma$-equivariant.
        In the following we identify $C$ with a two-dimensional sphere.

        It remains to prove that the tropical variety $\trop(S)$ is a strong deformation
        retract of $S^{an}$ under the map $\pi:S^{an}\to\trop(S)$ defined in
        Equation~\eqref{eq:fromanal}.
        We note that $\trop(S)$ is sch\"on, i.e. its intersection with every
        torus orbit is smooth (\cite[Definition~6.4.19]{Maclagan_Sturmfels_tropibook}). Moreover all
        multiplicities of top degree polyhedra are equal to one, hence the
        multiplicity at each point is equal to one by
        semicontinuity, see~\cite[Lemma~3.3.6]{Maclagan_Sturmfels_tropibook}.
        Therefore $\pi$ has a section $\trop(S)\to S^{an}$
        whose image is equal to a skeleton $S(\mathcal{S}, H)$ of a suitable
        semistable model $(\mathcal{S}, H)$ of $S$, see~\cite[Remark
        9.12]{Gubler_Rabinoff_Werner_second}.
        The skeleton $S(\mathcal{S}, H)$ is a proper strong deformation retract of
        $S^{an}$ by~\cite[\S4.9]{Gubler_Rabinoff_Werner}. The retraction map
        $S^{an} \to \trop(S)$ is equal to $\pi$, hence $\sigma$-equivariant as discussed in
        Example~\ref{ref:kristinsK3:exam}. The
        retraction $s$ in the claim of the theorem is the composition of
        retractions from $S^{an}$ to $\trop(S)$ and from $\trop(S)$ to the cube
        constructed above.\qed
    \end{proof}
    \begin{corollary}
        The analytified K3 surface $S^{an}$ is homotopy equivalent to a two-dimensional
        sphere.\qed
    \end{corollary}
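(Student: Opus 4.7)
The plan is to read the corollary off directly from Proposition~\ref{ref:homotopyofSan:prop}. That proposition already produces continuous maps $s:C\to S^{an}$ and $e:S^{an}\to C$ with $es=\operatorname{id}_C$ and $se$ homotopic to $\operatorname{id}_{S^{an}}$, which is precisely the definition of a homotopy equivalence between $S^{an}$ and $C$. So the only thing to check at the level of the corollary is the identification of $C$ with the two-dimensional sphere $S^2$.

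For this identification, recall that in the proof of the proposition, $C$ is taken to be the bounded cube sitting inside $\trop(S)\subset[-\infty,\infty]^3$ that is fixed by the antipodal involution (see Figure~\ref{K3}); this is the topological boundary of a solid three-dimensional cube in $\RR^3$. The boundary of any convex three-dimensional body is homeomorphic to $S^2$, so $C\cong S^2$. In fact, the proposition already records this identification explicitly, so no further verification is needed.

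In short, there is no substantive obstacle here: all of the real work — producing the $\sigma$-equivariant deformation retraction from $S^{an}$ first onto $\trop(S)$ via Payne's theorem together with the Gubler--Rabinoff--Werner skeleton (using that $\trop(S)$ is sch\"on with all multiplicities equal to one), and then from $\trop(S)$ onto the fixed inner cube — is carried out in Proposition~\ref{ref:homotopyofSan:prop}. The corollary is a one-line formal consequence: any strong deformation retract is a homotopy equivalent space, and the retract in question is a $2$-sphere.
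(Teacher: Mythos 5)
Your proposal is correct and matches the paper exactly: the corollary is stated with no separate proof precisely because Proposition~\ref{ref:homotopyofSan:prop} already supplies the maps $s$ and $e$ exhibiting a homotopy equivalence between $S^{an}$ and the sphere $C$. Your additional remark identifying $C$ (the boundary of the inner cube) with $S^2$ is already built into the statement of the proposition, so nothing further is needed.
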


    \begin{remark}\label{ref:problemremark}
        From Proposition~\ref{ref:homotopyofSan:prop} it does not follow that
        the homotopy between $se$ and $\operatorname{id}_{S^{an}}$ can be chosen
        $\sigma$-equivariantly. This is most likely true, but presently there
        seems to be no reference for this fact.
    \end{remark}

    Now we will analyze the topology of the analytification of the Enriques
    surface $S/\sigma$ using our knowledge about $S^{an}$.
    Let us briefly recall the maps which we will use.
    The quotient map $q:S\to S/\sigma$ analytifies to $q^{an}:S^{an} \to
    (S/\sigma)^{an}$.
    For any $X$ we denote $\pi:X^{an}\to X$ the natural map.
    Summarizing, we consider the following diagram.
        \begin{equation}%
            \begin{tikzpicture}%
                \matrix (m) [matrix of math nodes, row sep=2em, column sep=2.5em]
                {
                    S^{an} &  S^{an} & (S/\sigma)^{an}\\
                    S & S & S/\sigma\\
                };
                \path[->,font=\scriptsize]
                (m-1-1) edge node[auto] {$\sigma^{an}$} (m-1-2)
                        edge node[auto] {$\pi$} (m-2-1)
                (m-1-2) edge node[auto] {$\pi$} (m-2-2)
                        edge node[auto] {$q^{an}$} (m-1-3)
                (m-1-3) edge node[auto] {$\pi$} (m-2-3)
                (m-2-1) edge node[auto] {$\sigma$} (m-2-2)
                (m-2-2) edge node[auto] {$q$} (m-2-3);
            \end{tikzpicture}%
        \end{equation}
    It is crucial that $q^{an}$ is a quotient by $\sigma^{an}$, as we now prove.
    \begin{proposition}\label{ref:quotientscommutewithinvolution:prop}
        We have $(S/\sigma)^{an} = S^{an}/\sigma^{an}$ as topological spaces.
    \end{proposition}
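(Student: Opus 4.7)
The plan is to produce a continuous bijection $\phi\colon S^{an}/\sigma^{an}\to (S/\sigma)^{an}$ induced by $q^{an}$, and then upgrade it to a homeomorphism using properness. Since $q\circ\sigma=q$, functoriality of analytification (Equation~\eqref{eq:mapsanalitify}) gives $q^{an}\circ\sigma^{an}=q^{an}$. By the universal property of the quotient topology, $q^{an}$ factors through a continuous map $\phi$, and showing that $\phi$ is a homeomorphism is the content of the proposition.

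Bijectivity I would check locally. Cover $S/\sigma$ by affine opens $U=\Spec B$; since $q$ is finite, $q^{-1}(U)=\Spec A$ is affine and $\sigma$-stable with $A^{\sigma}=B$. The points of $U^{an}$ and of $q^{-1}(U)^{an}$ are multiplicative seminorms on $B$ and $A$ extending the valuation on $\kk$, and $q^{an}$ acts by restriction along $B\hookrightarrow A$. Surjectivity of $q^{an}$ on each chart follows because $A$ is a finite, hence integral, extension of $B$, so every bounded multiplicative seminorm on $B$ extends to one on $A$. Injectivity modulo $\sigma$ follows because the two-element Galois group $\{\mathrm{id},\sigma\}$ acts transitively on the set of such extensions; combined with surjectivity this exhibits $\phi$ as a set-theoretic bijection on each chart, and hence globally.

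To promote the continuous bijection $\phi$ to a homeomorphism I would use that the analytification of a finite morphism is a finite, and in particular proper, morphism of Berkovich spaces. Consequently $q^{an}$ is a closed continuous surjection, i.e.\ a topological quotient map. A topological quotient map factoring through $\phi$ forces $\phi$ itself to be a topological quotient, and being a continuous bijection already, $\phi$ is then a homeomorphism. Note that the freeness of $\sigma$ on $S$ (Example~\ref{ref:kristinsK3:exam}) ensures that $q^{an}$ has fibers of size exactly two everywhere, so no degeneracy of the quotient occurs on the analytic level.

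The main obstacle is the injectivity step, namely transitivity of the $\sigma$-action on extensions of a given multiplicative seminorm from $B$ to $A$. One approach is to write a point of $U^{an}$ as the composition of a character $B\to L$ into a valued extension $L/\kk$ with its absolute value, extend this character to $A\to L'$ via integrality, and apply the Galois theory of valued field extensions to the degree-two extension $A\otimes_B L\to L'$ to conclude that any two extensions are $\sigma$-conjugate. Alternatively, one may simply cite Berkovich's general construction of analytic quotients by finite group actions, whose underlying topological space is by construction the topological quotient; this shortcut bypasses the explicit seminorm argument at the cost of a reference.
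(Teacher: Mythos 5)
Your overall architecture coincides with the paper's: both proofs reduce the statement to (i) transitivity of $\sigma^{an}$ on the fibers of $q^{an}$ and (ii) the fact that $q^{an}$ is finite, hence proper, hence closed, which forces the bijection to be a homeomorphism. Your phrasing of step (ii) via ``closed continuous surjection $=$ quotient map'' is equivalent to the paper's argument that saturated opens have open image. Where you genuinely diverge is in step (i): the paper computes the fiber of $q^{an}$ over $x$ explicitly using Berkovich's fiber formula $S^{an}_x=(S_x\times_{\kappa(\pi(x))}\mathcal{H}(x))^{an}$, reduces to $\mathcal{H}(x)$ algebraically closed, and observes that the resulting rank-two algebra is either $\mathcal{H}(x)^{\times 2}$ (with $\sigma$ swapping factors) or $\mathcal{H}(x)[\varepsilon]/\varepsilon^2$, so that each seminorm is determined by its kernel and $\sigma$ permutes the kernels transitively. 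You instead invoke the classical transitivity of the Galois group on extensions of a valuation. That route works, but as written it has a small imprecision: $A\otimes_B\mathcal{H}(x)$ need not be a field, so ``Galois theory of valued field extensions'' applies directly only in the inert case; the split case $\mathcal{H}(x)\times\mathcal{H}(x)$ and the (a priori possible) non-reduced case must be treated separately, exactly as the paper does. This is easily repaired and not a real gap.

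One parenthetical claim of yours is actually false and worth correcting, although it is not used in the argument: freeness of $\sigma$ on $S$ does \emph{not} imply that $q^{an}$ has fibers of size exactly two everywhere. If $A\otimes_B\mathcal{H}(x)$ is a quadratic field extension of $\mathcal{H}(x)$ admitting a unique extension of the valuation (the ``inert/ramified'' case), the fiber of $q^{an}$ over $x$ is a single point, which is then fixed by $\sigma^{an}$; the involution $\sigma^{an}$ on $S^{an}$ is in general not fixed-point-free even though $\sigma$ is. Transitivity on fibers, not freeness, is what the quotient argument needs, and that is what both you and the paper actually establish.
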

    \begin{proof}
        First we prove the equality of \emph{sets}
        \begin{equation}\label{eq:setquotient}
            (S/\sigma)^{an} = S^{an}/\sigma^{an}.
        \end{equation}
        Consider $x\in (S/\sigma)^{an}$ and its image $\pi(x)\in S/\sigma$.
        We first describe the fiber $S^{an}_x$ of $q^{an}$ over $x$.
        Let $U = \Spec A$ be an affine neighborhood of $\pi(x)$, then the point
        $x$ corresponds to a semi-norm $|\cdot|_x$ on $A$ and $\pi(x)$ corresponds
        to the prime ideal $\mathfrak{p}_x =
        \{ f\in A\ :\ |f|_x = 0\}$, see~\cite[Remark 1.2.2]{Berkovich}.  By $\mathcal{H}(x)$ we denote the completion
        of the fraction field of $A/\mathfrak{p}_x = \kappa(\pi(x))$.
        We have the following equality~\cite[pg.~65]{Berkovich} of fibers
        \begin{equation}
            S^{an}_{x} = \left( S_x \times_{\kappa(\pi(x))} \mathcal{H}(x) \right)^{an}.
            \label{eq:fibers}
        \end{equation}
        In down-to-earth terms, the set $S^{an}_{x}$ consists of
        multiplicative seminorms on
        the $\mathcal{H}(x)$-algebra $R = H^0(S_x, \cO_{S_x})
        \otimes_{\kappa(\pi(x))} \mathcal{H}(x)$ which extend the norm
        $|\cdot|_x$ on $\mathcal{H}(x)$.
        Using \cite[Proposition~1.3.5]{Berkovich} we may assume $\mathcal{H}(x)$ is
        algebraically closed.
        Since $H^0(S_x,\cO_{S_x})^{\sigma} =
        \kappa(\pi(x))$, we have $R^{\sigma} = \mathcal{H}(x)$.
        Similarly, the ring $R$ is a rank two free $\mathcal{H}(x)$-module.
        Then $R$ is isomorphic to
        either $\mathcal{H}(x)^{\times 2}$ with $\sigma$ permuting
        the coordinates or to $\mathcal{H}(x)[\varepsilon]/\varepsilon^2$.
        Consider a multiplicative seminorm $|\cdot|_y$ on $R$. Its kernel $\mathfrak{q} =
        \{f\in R\ :\ |f|_y = 0\}$ is a prime ideal in $R$ and in both cases above
        we have $R/\mathfrak{q} =\mathcal{H}(x)$. Since $|\cdot|_y$ agrees with
        $|\cdot|_x$ on $\mathcal{H}(x)$, we see that $|\cdot|_y$ is determined
        uniquely by its kernel. The involution $\sigma$ acts transitively on
        those, hence $\sigma^{an}$ acts transitively on the set $S^{an}_x$ and
        Equality~\eqref{eq:setquotient} is proven.

        Second, we prove that $(S/\sigma)^{an} = S^{an}/\sigma^{an}$ as topological spaces;
        in other words that the topology on $(S/\sigma)^{an}$ is induced from this of
        $S^{an}$. Take an open subset $U \subset S^{an}$. We want to show that
        $q^{an}(U)$ is open. Clearly $U \cup \sigma^{an}(U) \subset S^{an}$ is open and
        a union of fibers, so its complement $Z \subset S^{an}$ is closed and a
        union of fibers. Now the map $q^{an}$ is finite~\cite[3.4.7]{Berkovich}, thus
        proper and so closed~\cite[3.3.6]{Berkovich}. In particular $q^{an}(Z)
        \subset (S/\sigma)^{an}$ is
        closed, so $\pi(U) = (S/\sigma)^{an} \setminus q^{an}(Z)$ is
        open. This proves that $(S/\sigma)^{an} = S^{an}/\sigma^{an}$ as topological
        spaces.\qed
    \end{proof}

    \begin{corollary}\label{ref:retractionofenriques:cor}
        There exists a retraction from $(S/\sigma)^{an}$ onto $\RR\PP^2$. In
        particular $(S/\sigma)^{an}$ is not contractible.
    \end{corollary}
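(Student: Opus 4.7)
The plan is straightforward: push the $\sigma$-equivariant retraction from Proposition~\ref{ref:homotopyofSan:prop} down to the quotient, using Proposition~\ref{ref:quotientscommutewithinvolution:prop} to identify the source of that quotient with $(S/\sigma)^{an}$.

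By Proposition~\ref{ref:homotopyofSan:prop} we have $\sigma$-equivariant continuous maps $s: C \to S^{an}$ and $e: S^{an} \to C$ with $e\circ s = \operatorname{id}_C$, where $C$ is a two-dimensional sphere carrying the antipodal $\sigma$-action. Since this action is free, the quotient $C/\sigma$ is canonically homeomorphic to $\RR\PP^2$. Being $\sigma$-equivariant, $s$ and $e$ descend to continuous maps between the quotient spaces; invoking Proposition~\ref{ref:quotientscommutewithinvolution:prop} to identify $S^{an}/\sigma^{an}$ with $(S/\sigma)^{an}$, we obtain continuous maps
\begin{equation*}
\bar{s}: \RR\PP^2 \longrightarrow (S/\sigma)^{an}, \qquad \bar{e}: (S/\sigma)^{an} \longrightarrow \RR\PP^2,
\end{equation*}
satisfying $\bar{e}\circ\bar{s} = \operatorname{id}_{\RR\PP^2}$. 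This relation forces $\bar{s}$ to be injective, and since $\RR\PP^2$ is compact while $(S/\sigma)^{an}$ is Hausdorff, $\bar{s}$ is a topological embedding. Hence $\bar{e}$ exhibits $\RR\PP^2$ as a retract of $(S/\sigma)^{an}$.

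For non-contractibility, any retract of a contractible space is contractible, but $\pi_1(\RR\PP^2) \simeq \ZZ/2\ZZ$ is non-trivial, so $\RR\PP^2$ is not contractible, and neither is $(S/\sigma)^{an}$.

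The technical content is entirely absorbed into the two preceding propositions. The only point requiring care is flagged in Remark~\ref{ref:problemremark}: it is not clear whether the homotopy $se \simeq \operatorname{id}_{S^{an}}$ can be chosen $\sigma$-equivariantly, so one should not attempt to upgrade this to a deformation retraction. Fortunately the corollary asks only for a retract, and for that the homotopy $se \simeq \operatorname{id}_{S^{an}}$ plays no role — only the strict identity $e\circ s = \operatorname{id}_C$ is used.
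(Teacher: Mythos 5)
Your proposal is correct and follows essentially the same route as the paper: descend the $\sigma$-equivariant maps $s$ and $e$ from Proposition~\ref{ref:homotopyofSan:prop} to the quotients via Proposition~\ref{ref:quotientscommutewithinvolution:prop}, observe that the identity $e\circ s=\operatorname{id}_C$ descends to $\bar e\circ\bar s=\operatorname{id}_{\RR\PP^2}$, and conclude. Your explicit remark that only the strict identity $e\circ s=\operatorname{id}_C$ (and not the equivariance of the homotopy flagged in Remark~\ref{ref:problemremark}) is needed matches the paper's intent exactly.
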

    \begin{proof}
        The argument follows formally from
        Proposition~\ref{ref:homotopyofSan:prop} and
        Proposition~\ref{ref:quotientscommutewithinvolution:prop}.
        Recall from Proposition~\ref{ref:homotopyofSan:prop} the
        $\sigma$-invariant map
        $e:S^{an} \to C$ and its section $s:C\to S^{an}$.
        Here $C$ is a two dimensional sphere with an antipodal involution $\sigma$ and clearly $C/\sigma  \simeq \RR\PP^2$.
        We now produce equivalents of $s$ and $e$ on the level of $S^{an}/\sigma^{an}
        \simeq  (S/\sigma)^{an}$.
        \begin{equation}
            \begin{tikzpicture}%
                \matrix (m) [matrix of math nodes, row sep=2em, column sep=2.5em]
                {
                    S^{an} &  S^{an} & (S/\sigma)^{an}\\
                    C & C & C/\sigma = \RR\PP^2\\
                };
                \path[->,font=\scriptsize]
                (m-1-1) edge node[auto] {$\sigma^{an}$} (m-1-2)
                        edge node[auto] {$e$} (m-2-1)
                (m-1-2) edge node[auto] {$e$} (m-2-2)
                        edge node[auto] {$q^{an}$} (m-1-3)
                (m-1-3) edge node[auto] {$e$} (m-2-3)
                (m-2-1) edge node[auto] {$\trop(\sigma)$} (m-2-2)
                (m-2-2) edge node[auto] {$q$} (m-2-3)
                (m-2-1) edge[bend left=15] node[auto] {$s$} (m-1-1)
                (m-2-2) edge[bend left=15] node[auto] {$s$} (m-1-2)
                (m-2-3) edge[bend left=15] node[auto] {$s$} (m-1-3);
            \end{tikzpicture}%
        \end{equation}

        The map $qe:S^{an} \to s(C)/\sigma = \RR\PP^2$
        is such that $qe\circ \sigma^{an} = qe$, thus
        by definition of quotient and by
        Proposition~\ref{ref:quotientscommutewithinvolution:prop} it induces a unique map
        $e:S^{an}/\sigma^{an} = (S/\sigma)^{an}\to
        \RR\PP^2$. Similarly $q^{an} \circ s$ satisfies $q^{an}\circ s \circ \trop(\sigma) =
        q^{an}\circ s$,
        hence induces a unique map $s:\RR\PP^2 \to (S/\sigma)^{an}$.
        Then $e\circ s:\RR\PP^2\to \RR\PP^2$ is the
        unique map induced $\sigma$-invariant map $qes = q$. Therefore $e\circ s =
        \operatorname{id}_{\RR\PP^2}$ and so $se$ is a retraction of
        $(S/\sigma)^{an}$ onto $s(\RR\PP^2)  \simeq \RR\PP^2$.\qed
    \end{proof}
    \begin{remark}
        If the difficulty presented in Remark~\ref{ref:problemremark} was
        removed, a similar argument would show that $(S/\sigma)^{an}$ strongly
        deformation retracts onto $\RR\PP^2$.
    \end{remark}

    \begin{proof}[of
        Theorem~\ref{ref:topologyOfAnalytifications:theorem}]
        Follows from Proposition~\ref{ref:homotopyofSan:prop} and
        Corollary~\ref{ref:retractionofenriques:cor}.\qed
    \end{proof}

\section*{Conclusion}
We constructed an explicit Enriques surface as the quotient $S/\sigma$ by an involution
$\sigma$ on a K3 surface $S$.  We then tropicalized $S$ and considered the quotient
$\trop(S)/\sigma$ under the tropicalized involution.
We computed in Proposition~\ref{ref:hodgeagreeEnriques:prop} the tropical
homology of this quotient and obtained expected results. We have \emph{not} proved
that $\trop(S)/\sigma = \trop(S/\sigma)$. However, we obtained this equality on the level of analytic spaces:
in Proposition~\ref{ref:quotientscommutewithinvolution:prop} we proved that
$S^{an}/\sigma^{an} = (S/\sigma)^{an}$ and concluded that $(S/\sigma)^{an}$ retracts onto an $\RR\PP^2
\subset (S/\sigma)^{an}$.

\begin{acknowledgement}
    This article was initiated during the Apprenticeship
      Weeks (22 August-2 September 2016), led by Bernd
      Sturmfels, as part of the Combinatorial Algebraic
      Geometry Semester at the Fields Institute for Research in Mathematical Sciences.
		We thank Kristin Shaw for many helpful conversations and for suggesting
	Example~\ref{ref:kristinsK3:exam}. We thank Christian Liedtke for many useful
	remarks and suggesting Proposition~\ref{ref:nosimpleequations:prop}.  We thank
	Julie Rana for discussions and providing the sources for the introduction.
    We thank Walter Gubler, Joseph Rabinoff and Annette Werner for sharing
    their insights.
	We also thank Bernd Sturmfels and the anonymous referees for providing many interesting suggestions
	and giving deep feedback. BB was supported by the Fields Institute for Research in Mathematical Sciences.
    CH was supported by the Fields Institute for Research in Mathematical Sciences and by the Clay Mathematics Institute and by NSA award H98230-16-1-0016.
    JJ was supported by the Polish National Science Center, project
	2014/13/N/ST1/02640.
\end{acknowledgement}

\end{document}